\titlespacing*{\section}{0pt}{3.5ex plus 0ex minus 0ex}{1.5ex plus 0ex}
\setlist{nolistsep}
\newtheoremstyle{plain}
{2mm}	
{2mm}	
{\slshape}	
{}	
{\color{black}\bfseries}	
{.}	
{.5em}	
{}	
\newtheoremstyle{definition}
{2mm}
{2mm}
{}
{}
{\color{black}\bfseries}
{.}
{.5em}
{}
\theoremstyle{plain}
\newtheorem{Theorem}{Theorem}[section]
\newtheorem{Lemma}[Theorem]{Lemma}
\newtheorem{Proposition}[Theorem]{Proposition}
\theoremstyle{definition}
\newtheorem{Definition}[Theorem]{Definition}
\newtheorem{Remark}[Theorem]{Remark}
\newtheorem{Example}[Theorem]{Example}
\theoremstyle{plain} 
\newcounter{MainTheoremCounter}
\newtheorem{Maintheorem}[MainTheoremCounter]{Theorem}
\theoremstyle{plain}
\newtheorem*{namedthm}{\namedthmname}
\newcounter{namedthm}
	\newenvironment{named}[2]
	{\def\namedthmname{#1}
	\refstepcounter{namedthm}
	\namedthm[#2]\def\@currentlabel{#1}}
	{\endnamedthm}
\numberwithin{equation}{section}
\definecolor{Scarlet}{rgb}{0.78, 0.11, 0.0}
\definecolor{Blue}{rgb}{0.0, 0.42, 0.47}
\newcommand{\N}{\mathbb{N}}
\newcommand{\Z}{\mathbb{Z}}
\newcommand{\Define}[1]{\textbf{{#1}}}
\renewcommand{\epsilon}{\varepsilon}
\renewcommand{\leq}{\leqslant}
\renewcommand{\geq}{\geqslant}
\renewcommand{\setminus}{\backslash}
\newcommand{\F}{\mathcal{F}}
\newcommand{\FilterClosure}[1]{\overline{#1}}
\newcommand{\G}{\mathbf{G}}
\newcommand{\allNonemptyFiniteSubsetsOf}[1]{\mathcal{P}_f (#1)}
\newcommand{\FS}{\allNonemptyFiniteSubsetsOf{\N}}
\newcommand{\A}{\mathscr{A}}
\newcommand{\W}{\mathcal{W}}
\newcommand{\low}{\text{low}}
\newcommand{\new}{\text{new}}
\renewcommand{\P}{\mathbf{P}}
\newcommand{\Dshift}[1]{\Delta_{#1}}
\newcommand{\B}{\mathcal{B}}
\newcommand{\FF}{\mathbb{E}}
\begin{document}

\title{Revisiting the nilpotent polynomial Hales-Jewett theorem}
\author{John H.~Johnson~Jr.~and~Florian Karl Richter\\
  {%
    \small
    \href{mailto:johnson.5316@osu.edu}{\url{johnson.5316@osu.edu}} and %
    \href{mailto:richter.109@osu.edu}{\url{richter.109@osu.edu}}
  }}
\affil{\small Department of Mathematics\\
  The Ohio State University\\
  Columbus, Ohio
  }
\date{}
\maketitle
\begin{abstract}
Answering a question posed by Bergelson and Leibman
in \cite{BL03}, we establish a nilpotent version of the
polynomial Hales-Jewett theorem that contains the main theorem in \cite{BL03} as a special case.
Important to the formulation and the proof of our main theorem is
the notion of a relative syndetic set (relative with respect to a closed non-empty subsets of $\beta\G$) \cite{SZZ09}. 
As a corollary of our main theorem we prove an extension
of the restricted van der Waerden Theorem to nilpotent groups,
which involves nilprogressions.
\end{abstract}

\noindent\textbf{Keywords:} Polynomial Hales-Jewett Theorem; Ramsey theory; algebra in the
  Stone-\v{C}ech compactification; nilpotent groups; nilprogressions; syndetic sets.

\tableofcontents

\section{Introduction}\label{section:intro}

Van der Waerden's Theorem on arithmetic progressions \cite{vdW28} is one of the oldest and most well known results in Ramsey theory. One equivalent formulation, which our result is formally similar to, is due to Kakeya and Morimoto \cite[Theorem~I]{Kakeya:1930ts} and involves the notion of a syndetic set:

\begin{Definition}
\label{def:syndetic-sets}
Let $(\G,\cdot)$ be a group. A set $A\subset\G$ is called \Define{syndetic} if there exits a finite non-empty set $K\subset \G$ such that $K^{-1} A=\G$, where $K^{-1}A=\{k^{-1}a: k\in K, a\in A\}$.
\end{Definition}

\begin{named}{Van der Waerden's Theorem}{\cite{vdW28,Kakeya:1930ts}}
\label{theorem:vdW}
Every syndetic subset of the integers contains arbitrarily long arithmetic progressions.
\end{named}

Using a dynamical approach, Furstenberg and Weiss \cite{FW78} extended van der Waerden's theorem to arbitrary abelian groups
and restricted the arithmetic structure to IP-sets. 
In the following definition and in the rest of this paper we use $\allNonemptyFiniteSubsetsOf{X}$ to denote the collection of all non-empty finite subsets of a set $X$. For $\alpha,\beta\in\FS$ we write $\alpha<\beta$ if $\max \alpha < \min\beta$.

\begin{Definition}
\label{def:IP-sets}
Let $(\G,\cdot)$ be a group. A map $x:\FS\to \G$ is called an $\Define{IP mapping}$ if for all $\alpha,\beta\in\FS$ with $\alpha<\beta$ one has $x(\alpha\cup\beta)=x(\alpha)\cdot x(\beta)$.
\end{Definition}

\begin{Theorem}[IP van der Waerden Theorem, {cf. {\cite[Section 3]{FW78}},~{\cite[Subsection 2.5]{Furstenberg81}} and {\cite[Subsection 1.5]{McCutcheon99}}}]
\label{theorem:IPvdW}
Let $k\in\N$, let $(\G,+)$ be an abelian group and let
$x_1,\ldots,x_k:\FS\to\G$ be IP mappings.
Then for any syndetic set $A\subset \G$
there are $\alpha\in\FS$ and $a\in\G$ such that
$\{a+x_1(\alpha),\ldots,a+x_k(\alpha)\}\subset A$.
\end{Theorem}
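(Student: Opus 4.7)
The plan is to use the algebraic structure of the Stone-\v{C}ech compactification $\beta\G$, in the spirit of the paper's later arguments. First I would encode the syndeticity of $A$ ultrafilter-theoretically: a syndetic set is piecewise syndetic, so standard ultrafilter theory furnishes a minimal idempotent $p\in\beta\G$ with $A-k\in p$ for some $k\in\G$; translating $A$ one may assume $A\in p$.

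The target then reduces neatly: it suffices to produce a single $\alpha\in\FS$ with $A-x_i(\alpha)\in p$ for every $i=1,\dots,k$, for then $\bigcap_{i=1}^{k}(A-x_i(\alpha))$ lies in $p$, is nonempty, and any element of it serves as the required $a$. Idempotency of $p$ makes the set $A^\star:=\{g\in\G:A-g\in p\}$ an element of $p$, with $A^\star-g\in p$ whenever $g\in A^\star$; the task is therefore to arrange that $x_i(\alpha)\in A^\star$ holds simultaneously for all $i$.

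To coordinate the $k$ IP mappings, one exploits that each $x_i$ is a partial semigroup homomorphism $(\FS,\cup)\to(\G,+)$. Fixing an idempotent ultrafilter $\mu$ on $(\FS,\cup)$ via the Galvin--Glazer argument, the pushforwards $(x_i)_{\ast}\mu$ become idempotents in $\beta\G$, and the abelianness of $\G$ lets these idempotents be added and manipulated freely. By induction on $k$, one refines $\mu$ at each stage so that the pushforward under the running partial sums assigns full weight to appropriate translates of $A^\star$, eventually extracting an $\alpha\in\FS$ on which every $x_i(\alpha)$ lies in $A^\star$.

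The main obstacle is this coordination step, essentially an instance of the Central Sets Theorem: several idempotent ultrafilters on $\G$ and on $\FS$ must be combined without destroying the minimal-idempotent property responsible for $A\in p$. Abelianness of $\G$ is precisely what makes the relevant commutations between these idempotents work cleanly, and its absence is the reason the nilpotent generalisation treated in the rest of the paper requires substantially new ideas.
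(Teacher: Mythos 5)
Your opening moves are fine: for syndetic $A$ one may pick a minimal idempotent $p\in\beta\G$ and a translate with $-k+A\in p$, and it would indeed suffice to find one $\alpha$ with $A-x_i(\alpha)\in p$ for every $i$. But everything after that point --- which is the entire content of the theorem --- is missing. The ``coordination step'' you defer to is not a technicality; it \emph{is} the van der Waerden-type statement: forcing $k$ different IP mappings to land, at a common $\alpha$ and up to a common translation, inside a single central set. The mechanism you propose cannot produce this. Pushing forward an idempotent $\mu$ on the partial semigroup $(\FS,\cup)$ under each $x_i$ gives idempotents $(x_i)_*\mu$ in $\beta\G$, but these only encode Hindman/IP-type configurations for each mapping \emph{separately}; there is no reason whatsoever that $A^\star$ (or any translate of it) is a member of any of them, and adding such idempotents, even in an abelian group, does not make the different mappings return at a common $\alpha$. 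The phrase ``one refines $\mu$ at each stage so that the pushforward under the running partial sums assigns full weight to appropriate translates of $A^\star$'' is not an argument: this is exactly the point where every known proof needs an induction on $k$ via a color-focusing/PET argument (compare \cref{lemma:color-focusing} in this paper), a dynamical IP-recurrence argument in the style of Furstenberg--Weiss, or an appeal to the Central Sets Theorem as a black box. Had you invoked the Central Sets Theorem explicitly --- apply it to the central set $-k+A$ with the sequences $y_{i,n}:=x_i(\{n\})$, and read off a single cell to get $a+x_i(\alpha)\in -k+A$ for all $i$ --- the derivation would be complete; as written, you instead sketch a ``proof'' of that theorem which does not hold together.

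Two further inaccuracies are worth flagging. First, your reduction demands $x_i(\alpha)\in A^\star$ for all $i$ with no translation inside $A^\star$ and with $p$ chosen before looking at the mappings; for an arbitrary central set this demand can simply be unachievable (in $\Z$, take a negative-valued IP mapping and a central set contained in $\N$), and what rescues it here is precisely the extra information that $A^\star=A/p$ is syndetic for $p$ minimal (\cref{theorem:SZZ-2.2} with the trivial filter) together with, once again, the missing simultaneous-return argument --- so the reduction silently carries the full difficulty rather than simplifying it. Second, for calibration: the paper does not prove this statement at all; it quotes it from Furstenberg--Weiss and the cited books, and within the paper it arises as the special case of \cref{theorem:3} with $\F=\{\G\}$, whose proof goes through exactly the PET/color-focusing induction that your sketch omits.
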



It is natural to ask if there are extensions of \cref{theorem:IPvdW} to non-abelian groups. We note that there is a version of \cref{theorem:IPvdW} for arbitrary groups (actually arbitrary semigroups, \cite{JohnsonJr:2015jp}), but if the underlying group structure is non-commutative then the classical arithmetic arrangement $\{x_1(\alpha)a,\ldots,x_k(\alpha)a\}$ is not generally guaranteed. 

However, in the case of nilpotent groups it is.
By interpreting IP mappings as ``polynomial mappings of degree 1'', Bergelson and Leibman in \cite{BL03} used this insight to prove a powerful polynomial extension of \cref{theorem:IPvdW} for nilpotent groups. To state their result, we list a few more definitions:

\begin{Definition}\label{def:pm}
\
\begin{enumerate}
[label=(\alph{enumi}),ref=(\alph{enumi}),leftmargin=*]
\item
We define
$
\FF:=\big\{V\subset \FS: \exists \beta~\text{s.t.}~ \{\alpha\in\FS: \alpha>\beta\}\subset V\}.
$
\item
Let $(\G,\cdot)$ be a group.
\begin{itemize}
\item[(i)]
Let $x,y:\FS\to\G$ be two mappings. We say $x=y$ $\FF$-a.e. if and only if $\{\alpha\in\FS:x(\alpha)=y(\alpha)\}\in\FF$.
\item[(ii)]
Let $x:\FS\to\G$ and $\beta\in\FS$. The \Define{(discrete) derivative (in direction $\beta$)} is the map 
$D_\beta x:\FS\to\G$ defined by
\[
D_\beta x(\alpha)=(x(\alpha))^{-1}x(\alpha\cup\beta)(x(\beta))^{-1}.
\]
\item[(iii)] Let $P:\FS\to\G$. We call $P$ a \Define{polynomial mapping of degree $1$} if there exists an IP map $x:\FS\to\G$ such that $P=x$ $\FF$-a.e.

Recursively, for $d\in\N$ with $d>1$, we call $P$ a  \Define{polynomial mapping of degree $d$} if and only if
$$
\{\beta\in\FS: D_\beta P~\text{is a polynomial mapping of degree $d-1$}\}\in\FF.
$$
\end{itemize} 
\end{enumerate} 
\end{Definition}

\begin{Theorem}[Polynomial IP van der Waerden Theorem for Nilpotent Groups, {\cite[Theorem 4.4]{BL03}}]\label{theorem:Poly-IPvdW-nilpotent}
Let $(\G,\cdot)$ be a nilpotent group, let
$P_1,\ldots,P_k:\FS\to\G$ be polynomial mappings and let 
$A\subset \G$ be syndetic. Then there are $a\in\G$ and
$\alpha\in\FS$ such that
$\{ P_1(\alpha) a,\ldots, P_k(\alpha) a\}\subset A$.
\end{Theorem}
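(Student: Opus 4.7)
The plan is to translate the statement into one about idempotent ultrafilters in $\beta\G$ and then attack it by a PET-style induction that simultaneously peels off nilpotency class. Recall that $A\subset\G$ is syndetic if and only if $A$ meets every minimal left ideal of $\beta\G$, equivalently $A$ belongs to some minimal idempotent $p\in\beta\G$. I would aim to establish the stronger statement: for every such $p$ and every $A\in p$ there exists $\alpha\in\FS$ with
\[
A\cap P_1(\alpha)^{-1}A\cap\cdots\cap P_k(\alpha)^{-1}A\in p.
\]
Because every set in an ultrafilter is nonempty, this intersection supplies the element $a$ for which the conclusion $\{P_1(\alpha)a,\ldots,P_k(\alpha)a\}\subset A$ holds.

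I would set up a double induction. The outer variable is the nilpotency class $c$ of $\G$, with base case $c=1$: the polynomial IP van der Waerden theorem in abelian groups, which itself follows from \cref{theorem:IPvdW} by a standard PET. The inner variable is a PET weight vector attached to the family $(P_1,\ldots,P_k)$, recording the multiset of degrees in the sense of \cref{def:pm} together with the multiplicities of their leading parts. The inductive step replaces $(P_1,\ldots,P_k)$ by a suitably augmented family derived from the mappings $D_\beta(P_i P_1^{-1})$ for $i\ge 2$: right-multiplication by $P_1^{-1}$ cancels the leading behaviour of $P_1$, and the discrete derivative $D_\beta$ from \cref{def:pm} drops polynomial degree by one, so the new family has strictly smaller PET weight and the inner induction hypothesis applies to it.

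The principal obstacle is the noncommutativity of $\G$. Rearranging products $P_i(\alpha)P_1(\alpha)^{-1}$ and expanding $D_\beta$ of a product introduces commutator correction terms, which are themselves polynomial mappings but land in the derived subgroup $[\G,\G]$ of nilpotency class at most $c-1$; the outer induction applies to these, provided I can combine the inner and outer conclusions at a single direction $\alpha$. This is precisely where the relative-syndeticity machinery of \cite{SZZ09} enters: the inductive hypothesis must be formulated for $p$ ranging over idempotents in the closed subsemigroup of $\beta\G$ cut out by the correction mappings in $[\G,\G]$, rather than only for minimal idempotents, so that the outer and inner applications can be performed with respect to one and the same ultrafilter. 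Once this strengthened, relative form of the theorem is in place, the standard idempotent identity giving $\{\alpha:P_i(\alpha)^{-1}A\in p\}\in p$ for each $i$, together with finite intersections of members of $p$, produces the common $\alpha$ and closes the induction.
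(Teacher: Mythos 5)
Your reduction of the theorem to an ultrafilter statement rests on a false equivalence. It is true that $A$ is syndetic if and only if $\overline{A}$ meets every minimal left ideal of $\beta\G$, but this is \emph{not} equivalent to ``$A$ belongs to some minimal idempotent'': membership in a minimal idempotent defines the class of central sets, and the two classes are incomparable. Already in $\Z$ (abelian, hence nilpotent) the odd integers are syndetic yet belong to no idempotent ultrafilter whatsoever, while central sets need not be syndetic. Consequently, proving your strengthened statement ``for every minimal idempotent $p$ and every $A\in p$ there is $\alpha$ with $A\cap P_1(\alpha)^{-1}A\cap\cdots\cap P_k(\alpha)^{-1}A\in p$'' would not yield \cref{theorem:Poly-IPvdW-nilpotent} as you claim. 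The passage can be repaired, but it needs an extra step you do not supply: either pick $k\in K$ with $k^{-1}A\in p$ (using $K^{-1}A=\G$) and apply your statement to the \emph{conjugated} family $k^{-1}P_ik$ so that the left translate can be absorbed, or, as the paper does, pass from a piecewise syndetic set $A\in p$ to the syndetic set $A/p$ of \cref{theorem:SZZ-2.2} and return to $A$ by a right multiplication as in \cref{rem:equivalent-statement-theorem-2}; note that the paper itself obtains the stated theorem simply as the special case $\F=\{\G\}$ of \cref{theorem:3}.

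The second, more serious gap is in how the induction is supposed to close. Your final step invokes ``the standard idempotent identity giving $\{\alpha:P_i(\alpha)^{-1}A\in p\}\in p$'', but this does not typecheck: that set is a subset of $\FS$, while $p$ is an ultrafilter on $\G$; and under any charitable reinterpretation (largeness of the set of directions $\alpha$ with respect to $\FF$ or an idempotent on $(\FS,\cup)$) the assertion is essentially the polynomial recurrence statement being proved, not a quotable identity — no Galvin--Glazer type identity holds for arbitrary polynomial mappings unrelated to $p$. Similarly, the point you flag yourself, namely combining the outer induction on nilpotency class (corrections in $[\G,\G]$) with the inner PET induction ``at a single direction $\alpha$'' and a single ultrafilter, is precisely where all the work lies, and gesturing at ``the closed subsemigroup cut out by the correction mappings'' does not define a workable inductive hypothesis. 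The paper avoids an induction on nilpotency class altogether: it runs a single PET induction on the BL03 weights (where the weight drop comes from dividing the system by a minimal-weight element, not from differentiating the quotients), and the bookkeeping you would need — which conjugates, products and derivatives of the current family remain admissible relative to the filter, and how the color focusing survives — is exactly what conditions \ref{item:poly2}--\ref{item:poly3} of \cref{def:gcofmp}, the notion of $\P$-minimal systems, the strengthened \cref{theorem:VdW-fsyndetic-2} (with its split $\A^-\cup\{1_\G\}\cup\A^+$ and its simultaneous treatment of a filter set $V$ and a piecewise $\F$-syndetic set $A$), and \cref{lemma:color-focusing} are built to provide. As it stands, your proposal names plausible ingredients but does not formulate the relative inductive statement that makes them close, so the core of the argument is missing.
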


Another fundamental result in Ramsey Theory is Bergelson's and Leibman's polynomial Hales-Jewett Theorem (PHJ) \cite{BL99}.
One can view \cref{theorem:Poly-IPvdW-nilpotent} as a partial extension of PHJ to nilpotent groups, but \cref{theorem:Poly-IPvdW-nilpotent} doesn't contain PHJ as a special case.
In \cite[Remark 6.4]{BL03} it was asked by Bergelson and Leibman if it is possible to formulate and prove a
``full-fledged nilpotent Polynomial Hales-Jewett Theorem''.



In this paper we offer an affirmative answer to this question in the
form of \cref{theorem:3} below.
We use the notion of ``filter-syndetic sets'' (introduced by Shuungula, Zelenyuk and Zelenyuk \cite{SZZ09}) and the notion of idempotent filters.


\begin{Definition}
Let $(\G,\cdot)$ be a group.
\begin{enumerate}
[label=(\alph{enumi}),ref=(\alph{enumi}),leftmargin=*]
\item
If $\F$ is a collection of subsets of a set $X$, then $\F$ is called a
\Define{filter} on $X$ if it satisfies
\begin{itemize}
 \item $\emptyset\notin\F$ and $X\in\F$;
 \item if $A\in\F$ and $B\supset A$ then $B\in\F$;
 \item if $A,B\in\F$ then $A\cap B\in\F$.
\end{itemize}
\item
Given two filters $\F$ and $\mathcal{G}$ on $\G$ we define the \Define{filter product} $\F\cdot \mathcal{G}$
according to the rule
\begin{equation}
\label{equation:filterproduct}
A\in\F\cdot\mathcal{G} \quad\Leftrightarrow\quad
\{x \in \G:\{y \in \G:x\cdot y\in A\}\in\mathcal{G}\}\in\F.
\end{equation}
It can easily be checked that the filter product of two filters
is itself a filter.
A filter $\F$ is called \Define{idempotent} if it
satisfies $\F\cdot\F\supset\F$.
A special class of idempotent filters is the class
of idempotent ultrafilers\footnote{An
\Define{ultrafilter} is a maximal filter, i.e., a filter that
is not properly contained in another filter.}.
Idempotent filters and idempotent ultrafilters have been studied extensively due to their applicability to Ramsey Theory and Ergodic Ramsey Theory (see \cite{Bergelson96,Bergelson10,BH89,BM00, BM10, FK85, HS12, Tserunyan14}).
\item
A set $A\subset\G$ is called  \Define{$\F$-syndetic} if for every $V\in\F$ there exists $K\in \allNonemptyFiniteSubsetsOf{V}$ such that $K^{-1}A\in\F$ (cf. \cite[Section 2]{SZZ09}).
Note that regular syndeticity (as introduced in \cref{def:syndetic-sets}) corresponds to the special case where $\F$ equals the trivial filter on $\G$, i.e. $\F=\{\G\}$.
\end{enumerate}
\end{Definition} 

For \cref{theorem:3} we need to restrict the class of polynomial mappings to those which are detectable by a given filter $\F$. This leads to the following definition.

\begin{Definition}\label{def:gcofmp}
Let $(\G,\cdot)$ be a group and let $\F$ be a filter on $\G$. For $u,c\in\G$ we write $u^c$ for the conjugate $c^{-1}uc$ and $[u,c]$ for the commutator $u^{-1}c^{-1}uc$.
\begin{enumerate}
[label=(\alph{enumi}),ref=(\alph{enumi}),leftmargin=*]
\item
Let $1_\G$ denote the identity element of $\G$. We say a map $x:\FS\to\G$ is \Define{$\F$-measurable} (or \Define{$(\FF,\F)$-measurable})
if for all $V\in\F$ the set $x^{-1}(V\cup\{1_\G\})$ belongs to $\FF$.
\item
Let $\F$ be a filter on $\G$ and let $\P$ be a collection of polynomial mappings taking values in the group $\G$. We say that $\P$ is a \Define{good collection of $\F$-measurable polynomial mappings} if
\begin{enumerate}
[label=(\Roman{enumii}),ref=(\Roman{enumii}),leftmargin=*]
\item\label{item:poly0}
$1_\G\in\P$ (by abuse of language we use $1_\G$ to denote both the identity in $\G$ and the constant polynomial mapping $\alpha\mapsto 1_\G$);
\item\label{item:poly1}
every $P\in\P$ is $\F$-measurable;
\item\label{item:poly2}
for all $R,P\in\P$, $\{c\in\G: R^c [c,P]\in\P\}\in\F$;
\item\label{item:poly3}
for all $R,P\in\P$ there exists $C\in\F$ and $\mathcal{T}\in\FF$ such that 
$(R D_\beta P)^c\in\P$ for all $c\in C$ and $\beta\in\mathcal{T}$.
\end{enumerate}
\end{enumerate}
\end{Definition}

We remark that conditions \ref{item:poly0} and \ref{item:poly1} above are  natural in our setting whereas conditions \ref{item:poly2} and \ref{item:poly3} are technical necessities needed to perform a localized color focusing argument in
\cref{lemma:color-focusing} below.
We refer the reader to
\cref{section:nilpotent-PHJ} for a concrete example of a good collections of $\F$-measurable polynomial mappings on a nilpotent group.

\begin{Maintheorem}
[Nilpotent PHJ]
\label{theorem:3}
Let $\F$ be an idempotent filter on a nilpotent
group $(\G,\cdot)$ and let $\P$ be a
good collection of $\F$-measurable polynomial mappings.
Then for all $P_1,\ldots,P_k\in\P$ and all
$\F$-syndetic sets $A$ there are $\alpha\in\FS$ and $a\in\G$
such that $\{P_1(\alpha)a,\ldots,P_k(\alpha)a\}\subset A$.
\end{Maintheorem}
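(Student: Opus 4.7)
The plan is to proceed by PET-style induction on a well-founded complexity measure (for instance, a Bergelson--Leibman weight vector recording the decreasing multiset of degrees of the $P_i$), with each inductive step driven by a color-focusing argument that exploits both the idempotency of $\F$ and the nilpotency of $\G$. The base case is when every $P_i$ is constant equal to $1_\G$; here $\F$-syndeticity of $A$ forces $A\neq\emptyset$ and the inclusion $1_\G\in\P$ immediately yields the conclusion.

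For the inductive step, I would select a polynomial $P_{j_0}$ of maximal weight. The identity
\[
P_i(\alpha\cup\beta) = P_i(\alpha)\cdot D_\beta P_i(\alpha)\cdot P_i(\beta)\qquad(\alpha<\beta)
\]
gives $P_i(\alpha\cup\beta)\,a = P_i(\alpha)\cdot D_\beta P_i(\alpha)\cdot (P_i(\beta)a)$. Factoring out $c := P_{j_0}(\beta)a$ as a new base point introduces conjugates $R^c$ and commutators $[c,P]$: condition (III) of \cref{def:gcofmp} guarantees that, for $\F$-many such $c$, the expression $R^c[c,P]$ remains in $\P$, while condition (IV) controls the derivative-enhanced polynomials $(R\,D_\beta P)^c$. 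For $\F$-many $\beta$ the rewritten tuple therefore lies in $\P$ and has strictly smaller weight, so the induction hypothesis supplies some $\alpha_\beta\in\FS$ and $a_\beta\in\G$ placing the new configuration inside $A$. Iterating this construction along a nested chain of sets in $\F$, extracted via $\F\cdot\F\supset\F$, yields a long sequence $\beta_1<\beta_2<\cdots$ together with base points, and the pigeonhole/coloring step that is the content of \cref{lemma:color-focusing} selects indices $m<n$ for which the shifts at stages $m$ and $n$ collapse, producing the desired $\alpha\in\FS$ and $a\in\G$ with $\{P_1(\alpha)a,\ldots,P_k(\alpha)a\}\subset A$.

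The main obstacle is the noncommutativity of $\G$: each PET-reduction step creates conjugates and commutators that must themselves remain in a good collection of $\F$-measurable polynomial mappings. Nilpotency rescues the argument since iterated commutators climb the lower central series and eventually vanish, so the combined (degree, commutator-depth) complexity is genuinely well-founded. Conditions (III) and (IV) of \cref{def:gcofmp} are engineered precisely to keep $\P$ closed under the algebraic manipulations required --- absorbing a conjugation by a generic element into a single polynomial, and absorbing a product with a discrete derivative --- so that the inductive hypothesis can be invoked at each stage. This also explains why the syndeticity of $A$ is taken relative to the same filter $\F$ along which the focusing chain is extracted: the $\F$-measurability of the $P_i$ ensures that the base points produced at each stage remain compatible with the $\F$-syndetic structure of $A$, which is what allows the color-focusing pigeonhole to actually locate the monochromatic configuration inside $A$ rather than in some coarser partition.
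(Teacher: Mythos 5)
Your high-level strategy does coincide with the paper's (PET induction on a Bergelson--Leibman weight, a color-focusing argument, and conditions \ref{item:poly2} and \ref{item:poly3} of \cref{def:gcofmp} to keep conjugates and derivative-products inside $\P$), but as set up the induction cannot close, and this is the genuine gap. You induct on the statement of \cref{theorem:3} itself, yet after the PET reduction (dividing the system by some $P_{\low}$) the polynomials that came from the identity/"already handled" part can only be steered into members of the filter $\F$, not into $A$, and the focusing step needs simultaneous control of several distinct translates $c_j^{-1}A$ by elements of a finite set. This is exactly why the paper proves the stronger \cref{theorem:VdW-fsyndetic-2}, whose statement splits the system as $\A^{-}\cup\{1_\G\}\cup\A^{+}$ (the $\A^-$-part lands in an arbitrary $V\in\F$, only the $\{1_\G\}\cup\A^+$-part lands in the target set) and works with \emph{piecewise} $\F$-syndetic sets; your claim that ``the induction hypothesis supplies $\alpha_\beta,a_\beta$ placing the new configuration inside $A$'' is not a statement you are entitled to invoke. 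Relatedly, you never say how $\F$-syndeticity actually enters: the paper picks $p\in K(\FilterClosure{\F})$ with $A\in p$, replaces $A$ by the $\F$-syndetic set $A/p$ (\cref{theorem:SZZ-2.2}), and uses $\F$-syndeticity to extract a finite set $G$ with $G^{-1}(A/p)\in\F$; it is $|G|$ that bounds the length of the color-focusing chain (the paper runs \cref{lemma:color-focusing} with $s=|G|+1$) and makes the pigeonhole legitimate. In your sketch the ``long sequence $\beta_1<\beta_2<\cdots$'' has no finite color set over which a repeat is forced, and the sets produced along the way (intersections with filter sets, translates by single elements of $G$) are not $\F$-syndetic in general --- they are only piecewise $\F$-syndetic, which is why that notion and \cref{rem:partition-regularity-pw-f-syndetic} are needed.

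There is also a concrete error in the reduction step: you propose to factor out a polynomial $P_{j_0}$ of \emph{maximal} weight, but by part \ref{weight7} of \cref{proposition:weight} the weight of the system drops only when one divides by an element of \emph{lowest} weight (not $\FF$-a.e.\ equal to $1_\G$); dividing by a maximal-weight polynomial does not in general decrease the weight, so your induction need not terminate. Two smaller points: the base case of the induction that actually runs is $\A^+=\emptyset$ (not ``all $P_i$ constant''), and even that case requires the $\F$-measurability argument rather than mere nonemptiness of $A$; and the bookkeeping that lets conditions \ref{item:poly2} and \ref{item:poly3} be applied along the iteration is the notion of a $\P$-minimal system together with \cref{lemma:f-minimal-conjugation} and \cref{lemma:f-minimal-derivative}, which your sketch gestures at but does not supply.
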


By choosing $\F$ to be the trivial filter $\{\G\}$, we see that \cref{theorem:3} contains \cref{theorem:Poly-IPvdW-nilpotent} as a special case. We also claim that \ref{theorem:PHJ} (which we recall in \cref{section:HJ}) can be derived quickly from \cref{theorem:3}. The details are provided in \cref{section:HJ}.


As a new combinatorial application of \cref{theorem:3} we formulate and prove an extension of the so-called ``Restricted van der Waerden Theorem'' \cite{Spencer75,Nesetril:1976tl} to nilpotent groups (see \cref{section:rexvdW}).

\textbf{Acknowledgements:} We thank Vitaly Bergelson for reading an
early draft of this paper and providing several helpful comments and
additional references, and also Joel Moreira for several helpful
discussions. We thank the referee for helpful suggestions on improving and streamlining the exposition of this paper.

\section{Topological algebra of closed subsemigroups of $\beta\G$}
\label{section:top-alg}

For a group $(\G,\cdot)$, let
$\beta\G$ denote the collection of all ultrafilters on $\G$.
Note that $\beta\G$ is a semigroup when endowed with the operation given by equation \eqref{equation:filterproduct}.
Given a subset $A\subset \G$ we define $\overline{A}:=\{p\in \beta\G: A\in p\}$. It is well known that
$\beta\G$ endowed with the topology generated by
$\{\overline{A}: A\subset \G\}$ is a compact Hausdorff
right topological semigroup (see
\cite{HS12} for a comprehensive discussion on the
topological and algebraical aspects of $\beta\G$).

There exists a natural one-to-one correspondence 
between filters on $\G$ and non-empty closed subsets of $\beta\G$:
If $T\subset \beta\G$ is non-empty and closed then the filter
associated with $T$ is defined as
$\F_T:=\{A\subset\G: T\subset \overline{A} \}$.
Vice versa, if $\F$ is a filter on $\G$ then
the \Define{closure of the filter} $\F$, defined as the set
$\FilterClosure{\F} := \bigcap_{A\in \F} \overline{A}$,
is a non-empty and closed subset of $\beta\G$. 
Note that $\FilterClosure{\F}$ is the collection of all ultrafilters that extend $\F$.
Clearly, the closure of the filter $\F_T$ is $T$
and the filter associated with $\FilterClosure{\F}$ is $\F$.

We are particularly interested in idempotent filters.
If $\F$ is idempotent then the closure
$T=\FilterClosure{\F}$ is a
closed subsemigroup of $\beta\G$. Note that
the reverse is not true; there are closed
subsemigroups whose corresponding filter is
not idempotent (see \cite{Davenport90}
for a complete combinatorial characterization
of closed subsemigroups of $\beta\G$).

Any compact Hausdorff right topological semigroup $T$ has a smallest two sided ideal $K(T)$, which is the union of all minimal left ideals and is also the union of all minimal right ideals (see \cite{HS12}). Also, by the Ellis-Numakura theorem (\cite{Ellis58,Numakura52}), every compact Hausdorff right topological semigroup $T$ contains at least one idempotent element. We denote the collection of all idempotent elements in $T$ by $E(T)$.



\begin{Definition}\label{theorem:SZZ-2.3}
Let $T=\FilterClosure{\F}$ be a closed subsemigroup of $\beta\G$ and let
$A\subset\G$. The set $A$ is called \Define{piecewise $\F$-syndetic} if $\overline{A} \cap K(T) \ne \emptyset$.  (See \cite[Section 2]{SZZ09} for a combinatorial characterization of piecewise $\F$-syndetic sets.)
\end{Definition}

\begin{Remark}
\label{rem:partition-regularity-pw-f-syndetic}
It is a consequence of \cref{theorem:SZZ-2.3} that
if $A$ is piecewise $\F$-syndetic and $A$ is partitioned into finitely
many classes then at least one of the classes is
piecewise $\F$-syndetic.
\end{Remark}

Before we end this section, let us formulate an
algebraic connection between $\F$-syndetic sets and the smallest ideal of a closed subsemigroup.

\begin{Theorem}[Theorem 2.2, \cite{SZZ09}]\label{theorem:SZZ-2.2}
Let $T=\FilterClosure{\F}$ be a closed subsemigroup of $\beta\G$ and let
$p\in K(T)$. Then for any set $A\in p$ the set
$A/p=\{x: x^{-1}A\in p\}$ is $\F$-syndetic.
\end{Theorem}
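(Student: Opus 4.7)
The plan is to argue by contradiction. Suppose that $A/p$ fails to be $\F$-syndetic. Then there is some $V\in\F$ such that $K^{-1}(A/p)\notin\F$ for every finite non-empty $K\subset V$; equivalently, for every such $K$ the closed subset
\[
N_K := T \cap \overline{\G\setminus K^{-1}(A/p)} = \{q\in T : \G\setminus K^{-1}(A/p)\in q\}
\]
of $T$ is non-empty. Since $K\subset K'$ implies $N_{K'}\subset N_K$, the family $\{N_K\}$ has the finite intersection property, so compactness of $T$ produces a single $q^*\in T$ lying in every $N_K$. Specializing to $K=\{k\}$ and unpacking the definitions of $A/p=\{x:x^{-1}A\in p\}$ and of the filter product, one checks that $\G\setminus k^{-1}(A/p)\in q^*$ is equivalent to $A\notin k\cdot q^*\cdot p$. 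Thus we arrive at $A\notin k q^* p$ for every $k\in V$.

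Next I would exploit $p\in K(T)$ to place both $p$ and $q^*p$ inside a common minimal left ideal of $T$. Let $L$ be the minimal left ideal of $T$ containing $p$. Since $Lp$ is a non-empty left ideal of $T$ contained in $L$, minimality of $L$ forces $Lp=L$, whence $p\in Lp\subset Tp\subset L$, and so $L=Tp$. In particular $q^*p\in L$, and applying the same reasoning with $q^*p$ in place of $p$ yields $L=L\cdot(q^*p)\subset T\cdot(q^*p)$. Therefore there exists $r\in T$ with $r q^* p = p$.

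Finally, observe that because $V\in\F$ and $r\in T=\FilterClosure{\F}$, we have $V\in r$. On the other hand $A\in p=rq^*p$, which upon unpacking the filter product is exactly the statement $\{g\in\G : A\in g q^* p\}\in r$. Intersecting these two members of the ultrafilter $r$ produces some $g\in V$ with $A\in g q^* p$, in direct contradiction to the property of $q^*$ established in the first paragraph. The main subtlety is the semigroup-theoretic step showing that $p$ and $q^*p$ lie in the same minimal left ideal of $T$, and hence that an $r\in T$ with $r q^* p = p$ exists; once that is in hand, the rest of the argument is a clean combination of compactness and unpacking of the filter product.
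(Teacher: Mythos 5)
Your proof is correct. Note that the paper itself does not prove this statement --- it is quoted verbatim from \cite{SZZ09} (Theorem 2.2) --- so there is no in-paper argument to compare against; what you have written is the natural relativization to $T=\FilterClosure{\F}$ of the classical argument that $\{x:x^{-1}A\in p\}$ is syndetic for $p\in K(\beta\G)$ (cf.\ Theorem 4.39 in \cite{HS12}). All the delicate points check out: the sets $N_K$ are nonempty precisely because $B\notin\F$ means $B\notin q$ for some $q\in T$, and they form a directed family, so compactness yields $q^*$; the identity $A\in k\cdot q^*\cdot p \Leftrightarrow k^{-1}(A/p)\in q^*$ is a correct unpacking of the product for a principal ultrafilter times an ultrafilter; the manipulations $Lp=L$, $Tp=L$, $L=L(q^*p)\subset T(q^*p)$ are valid without assuming an identity in $T$ and give $r\in T$ with $rq^*p=p$; and the final step correctly uses that $V\in\F$ forces $V\in r$ for every $r\in T$, producing $g\in V$ with $A\in gq^*p$, contradicting the defining property of $q^*$.
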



\begin{Remark}\label{rem:equivalent-statement-theorem-2}
Using the finite intersection property for ultrafilters, it is easy to show that if $A/p$ contains an arrangement of the form $\{P_1(\alpha)a,\ldots,P_k(\alpha)a\}$ for some $a\in\G$ and some $\alpha\in\FS$, then $A$ contains an arrangement of the form $\{P_1(\alpha)b,\ldots,P_k(\alpha)b\}$ for some $b\in \G$.
In view of \cref{theorem:SZZ-2.2} it is therefore clear that in \cref{theorem:3} one can replace `$\F$-syndetic' with `piecewise $\F$-syndetic'.
\end{Remark}

\section{Connections to the polynomial Hales-Jewett Theorem}
\label{section:HJ}

In this section we prove that \cref{theorem:3} implies \ref{theorem:PHJ}.

\ref{theorem:PHJ} has many equivalent forms
(cf. \cite{BL99,BL03,McCutcheon99,Walters00}), one of which is the following.

\begin{named}{PHJ}{{\cite[Theorem PHJ]{BL99}}}
\label{theorem:PHJ}
Let $r,d,k\in\N$ and let $V:=\N^d\times\{1,\ldots,k\}$.
For any $r$-coloring of $\allNonemptyFiniteSubsetsOf{V}$ there exists $b\in \allNonemptyFiniteSubsetsOf{V}$ and $\alpha\in\FS$ such that $b\cap \big(\alpha^d\times\{1,\ldots,k\}\big)=\emptyset$ and the sets
$$
b,~b\cup \big(\alpha^d\times\{1\}\big),~b\cup \big(\alpha^d\times\{2\}\big),\ldots,~ b\cup\big(\alpha^d\times\{k\}\big)
$$
are all of the same color.
\end{named}

\begin{Proposition}\label{proposition:monomial->PHJ}
\cref{theorem:3} implies \ref{theorem:PHJ}.
\end{Proposition}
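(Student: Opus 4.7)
The plan is to realize the PHJ statement as a consequence of \cref{theorem:3} for the abelian (hence nilpotent) group $\G$ of finite subsets of $V:=\N^d\times\{1,\ldots,k\}$ under symmetric difference (with identity element $\emptyset$). Define polynomial mappings $P_i:\FS\to\G$ by $P_i(\alpha):=\alpha^d\times\{i\}$ for $i=1,\ldots,k$. A short induction on $d$ shows each $P_i$ is a polynomial mapping of degree $d$ in the sense of \cref{def:pm}: for $d=1$, $\alpha\mapsto\alpha\times\{i\}$ is an IP-map, since $\alpha<\beta$ makes $\alpha\times\{i\}$ and $\beta\times\{i\}$ disjoint (so their union equals their symmetric difference); for $d>1$, expanding $(\alpha\cup\beta)^d$ as a disjoint union over $\{\alpha,\beta\}^d$ shows that $D_\beta P_i$ is a symmetric difference of polynomial mappings of degree $d-1$.

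For the filter, let $W_n:=\{1,\ldots,n\}^d\times\{1,\ldots,k\}$ and $V_n:=\{g\in\G:g\cap W_n=\emptyset\}$. Each $V_n$ is a subgroup of $\G$, so $V_n\cdot V_n=V_n$ and the filter $\F$ generated by $\{V_n:n\in\N\}$ is idempotent. Since $\min\alpha>n$ forces $P_i(\alpha)\in V_n$ and $\{\alpha\in\FS:\min\alpha>n\}\in\FF$, each $P_i$ is $\F$-measurable. Take $\P$ to be the smallest family of $\F$-measurable polynomial mappings containing $1_\G,P_1,\ldots,P_k$ that is closed, up to the appropriate $\FF$-a.e.\ qualifications on $\beta$, under $(R,P)\mapsto R\cdot D_\beta P$. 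This $\P$ is a good collection in the sense of \cref{def:gcofmp}(b): condition \ref{item:poly2} holds trivially because $\G$ is abelian (every $u^c=u$ and every $[c,P]=1_\G$), condition \ref{item:poly3} is precisely the stipulated closure, and $\F$-measurability is inherited through products and derivatives because each $V_n$ is a subgroup.

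Given an $r$-coloring $\chi:\G\to\{1,\ldots,r\}$, partition regularity of piecewise $\F$-syndetic sets (\cref{rem:partition-regularity-pw-f-syndetic}) guarantees that some color class $A$ is piecewise $\F$-syndetic. Apply \cref{theorem:3}, invoking \cref{rem:equivalent-statement-theorem-2} to handle the piecewise case, with the $k+1$ polynomial mappings $1_\G,P_1,\ldots,P_k\in\P$. This produces $\alpha\in\FS$ and $b\in\G$ such that $\{b,~b\triangle P_1(\alpha),~\ldots,~b\triangle P_k(\alpha)\}\subset A$, so these $k+1$ sets are monochromatic under $\chi$.

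The only delicate step---and the main obstacle in the reduction---is to arrange the disjointness $b\cap(\alpha^d\times\{1,\ldots,k\})=\emptyset$ required by PHJ, since only then does $b\triangle P_i(\alpha)=b\cup(\alpha^d\times\{i\})$. The filter $\F$ is designed for exactly this purpose: in the idempotent ultrafilter argument underlying \cref{theorem:3}, $b$ is determined first from an idempotent $p\in\FilterClosure{\F}$, and $\alpha$ is then selected from a set in $\FF$ that depends on $b$; this set may be shrunk to $\{\alpha\in\FS:\min\alpha>M\}$ for $M$ larger than every first-$d$ coordinate appearing in any element of the finite set $b$. With that choice, every $(a_1,\ldots,a_d)\in\alpha^d$ satisfies $a_j>M$ for every $j$, so no element of $\alpha^d\times\{1,\ldots,k\}$ belongs to $b$, yielding the required disjointness and producing the configuration demanded by PHJ.
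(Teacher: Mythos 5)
Your setup (Boolean group of finite subsets of $V$ under symmetric difference, the filter generated by the subgroups $V_n$, and the mappings $P_i(\alpha)=\alpha^d\times\{i\}$) is fine as far as it goes, but the argument breaks at exactly the step you flag as delicate: the disjointness $b\cap\big(\alpha^d\times\{1,\ldots,k\}\big)=\emptyset$. \cref{theorem:3} is being used here as a black box (the proposition asserts that \cref{theorem:3} \emph{implies} \ref{theorem:PHJ}), and its statement gives you only \emph{some} $\alpha\in\FS$ and \emph{some} translate $a$; it gives no clause tying $\min\alpha$ to the translate. Your proposed fix appeals to the internal mechanics of the proof of \cref{theorem:3}, and moreover misdescribes them: in the proof of \cref{theorem:VdW-fsyndetic-2} the translate is produced \emph{after} (indeed, from a set $N\in p$ chosen once $\alpha$, $c$ and $w$ are already fixed), not before, so there is no stage at which $b$ is known and $\alpha$ is subsequently drawn from an $\FF$-large set depending on $b$. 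The ``for all $\beta$ there is $\alpha>\beta$'' quantifier in \cref{theorem:VdW-fsyndetic-2} does not help either, because your threshold $M$ depends on $b$, which is only produced together with $\alpha$ --- the argument is circular. And the problem is not cosmetic: in the symmetric-difference group, overlap between $b$ and $\alpha^d\times\{i\}$ is simply erased ($b\triangle P_i(\alpha)$ is again a finite subset, with no trace of the collision), so nothing in your choice of $A$ (an arbitrary color class of all of $\G$) can force the disjointness.

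The paper's proof resolves precisely this issue by changing the ambient group and the target set rather than the selection of $\alpha$. It works in the \emph{free} abelian group on the generators $e_{n,i}$, $n\in\N^d$, $i\in\{1,\ldots,k\}$ (integer coefficients, not mod $2$), takes the filter generated by the sets $U_N$ of $\{0,1\}$-coefficient elements with color-wise disjoint supports avoiding $\{1,\ldots,N\}^d$, and applies partition regularity not to all of $\G$ but to $U_1\in\F$ (piecewise $\F$-syndetic since $\F$ is idempotent), partitioned via the map $\phi$ according to the given coloring. Then the conclusion $a\in D_{j_0}\subset U_1$ and $a+e_{\alpha^d,i}\in D_{j_0}\subset U_1$ \emph{automatically} forces the support of $a$ to be disjoint from $\alpha^d\times\{1,\ldots,k\}$: a collision in color $i$ would create a coefficient $2$, and a collision in another color would violate the disjoint-supports condition defining $U_1$. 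In short, the disjointness must be encoded into the piecewise $\F$-syndetic set being hit (via integer coefficients and the structured set $U_1$), not extracted from a nonexistent control on $\alpha$ in the statement of \cref{theorem:3}; repairing your argument essentially amounts to reproducing that construction.
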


\begin{proof}
Let $r,d,k\in \N$ be arbitrary.
Let $(\G,+)$ be the free abelian group in
$\N^d\times \{1,\ldots,k\}$ generators, which we denote by $(e_{n,i})_{n\in\N^d,i\in\{1,\ldots,k\}}$.
For $\gamma\in\allNonemptyFiniteSubsetsOf{\N^d}$ define
$
e_{\gamma,i}:=\sum_{n\in\gamma}e_{n,i}.
$
Let
$$
U_N:=\left\{\sum_{i=1}^k e_{\gamma_i,i}:
\gamma_i \in \allNonemptyFiniteSubsetsOf{\N^d\setminus \{1,\ldots,N\}^d},~
\gamma_i\cap\gamma_j=\emptyset~\text{for}~i\neq j \right\}
$$
and let $\F$ be defined as
\begin{equation*}
\F:=\left\{A\subset \G: \exists N~\text{s.t.}~
U_N\subset A\right\}.
\end{equation*}
It is straightforward to check that
$\F$ is an idempotent filter on $\G$.

For $\beta_1,\ldots,\beta_d\in\FS$ define $M(\beta_1,\ldots,\beta_d):=\beta_1\times\ldots\times\beta_{d}$.
We refer to maps of the form $\alpha\mapsto M(\alpha,\beta_2,\ldots,\beta_d)$, $\alpha\mapsto M(\beta_1,\alpha,\beta_3,\ldots,\beta_d)$, $\ldots$, $\alpha\mapsto M(\beta_1,\ldots,\beta_{d-1},\alpha)$
as set-monomials of degree $1$.
We refer to maps of the form $\alpha\mapsto M(\alpha,\alpha,\beta_3,\ldots,\beta_d)$, $\alpha\mapsto M(\alpha,\beta_2,\alpha,\beta_4,\ldots,\beta_d)$, $\ldots$, $\alpha\mapsto M(\beta_1,\ldots,\beta_{d-2},\alpha,\alpha)$ as set-monomials of degree $2$. Similarly we define set-monomials of degree $3,\ldots,d$. (We view the map $\alpha\mapsto\emptyset$ as the set-monomial of degree $0$.)
We say that two set-monomials $M_1$ and $M_2$ are disjoint if there exists $\alpha_0\in\FS$ such that $M_1(\alpha)\cap M_2(\alpha)=\emptyset$ for all $\alpha>\alpha_0$.

Let $\P$ denote the collection of all maps of the form $\alpha\mapsto e_{M_1(\alpha),1}+\ldots+e_{M_k(\alpha),k}$ where $M_1,\ldots,M_k$ are pairwise disjoint set-monomials of degree $\leq d$.
Clearly, all maps in $\P$ are $\F$-measurable polynomial mappings. Hence $\P$ satisfies part \ref{item:poly0} and \ref{item:poly1} of \cref{def:gcofmp}. Note that part \ref{item:poly2} of \cref{def:gcofmp} is only meaningful for non-abelian groups, as any collection of polynomial mappings taking values in an abelian group trivially satisfies this condition. Finally, we leave it to the reader to verify that $\P$ satisfies part \ref{item:poly3} of \cref{def:gcofmp}. Hence, $\P$ is a good collection of $\F$-measurable polynomial mappings.

Let $\phi:U_1\to V$ denote the map defined by
$$
\phi\left(\sum_{i=1}^k e_{\gamma_i,i}\right):=\big(\gamma_1\times\{1\}\big)\cup\big( \gamma_2\times\{2\}\big)\cup\ldots\cup\big(\gamma_k\times\{k\}\big).
$$
Now suppose we are given an arbitrary $r$-coloring of $V$. For convenience we view this finite coloring as a finite partition $V=\bigcup_{j=1}^r C_j$. For $j=1,\ldots,r$ define $D_j:=\phi^{-1}(C_j)$ and observe that $U_1=\bigcup_{j=1}^r D_j$. Since $\F$ is idempotent, any set that is contained in $\F$ is automatically piecewise $\F$-syndetic. In particular, $U_1$ is piecewise $\F$-syndetic because $U_1\in\F$. 
It thus follows from \cref{rem:partition-regularity-pw-f-syndetic} that there exists $j_0\in\{1,\ldots,r\}$ such that $D_{j_0}$ is also piecewise $\F$-syndetic.

Consider the polynomial mappings $P_0,P_1,\ldots,P_k\in\P$ where $P_0(\alpha):=1_\G$ and
$$
P_i(\alpha):=e_{\alpha^d,i},\qquad \text{for}~i\in\{1,\ldots,k\}.
$$
In light of \cref{theorem:3} and \cref{rem:equivalent-statement-theorem-2} we can find $\alpha\in\FS$ and $a\in \G$ such that
$$
a+P_0(\alpha),~a+P_1(\alpha),~a+P_2(\alpha),\ldots,a+P_k(\alpha)\in D_{j_0}.
$$
Take $b:=\phi(a)$.
It follows from $a\in U_1$ and $a+P_i(\alpha)\in U_1$ that $b\cap \big(\alpha^d\times\{1,\ldots,k\}\big)=\emptyset$. Also, $\phi\big(a+P_0(\alpha)\big)=\phi(a)=b$ and $\phi\big(a+P_i(\alpha)\big)=b\cup\big(\alpha^d\times\{i\}\big)$ for all $i\in\{1,\ldots,k\}$, which proves that
$$
b,~b\cup \big(\alpha^d\times\{1\}\big),~b\cup \big(\alpha^d\times\{2\}\big),\ldots,~ b\cup\big(\alpha^d\times\{k\}\big)\in C_{j_0}.
$$
\end{proof}

\section{Extending the restricted van der Waerden theorem to nilpotent groups}
\label{section:rexvdW}

It was shown by Spencer \cite{Spencer75} that there exists a set $V\subset \N$ containing no $k+1$ term arithmetic progressions and such that for any partition of $V$ into finitely many classes, some class must contain a $k$-term arithmetic progression.
This result is known as the restricted van der Waerden Theorem\footnote{A similar result was independently obtained by Ne{\v s}et{\v r}il and R{\"o}del \cite{Nesetril:1976tl}.}.

Using \cref{theorem:3} we can extend this result to nilpotent groups. In this extension
the role of arithmetic progressions is taken over by so-called
nilprogressions. Nilprogressions are a well studied object
that emerged from various generalizations of Freiman's theorem
to non-abelian groups \cite{BG11,BGT12,BGT13,BT16}.
For their definition let $\Sigma_{< k}$ denote the collection of
all words $w(*_1,\ldots,*_d)$ in the letters $*_1,\ldots,*_d$
such that every letter
$*_i$ appears at most $(k-1)$ times.
Also, given a word $w(*_1,\ldots,*_d)$ and
elements $x_1,\ldots,x_d$ in a group $(\G,\cdot)$
we use $w(x_1,\ldots,x_d)$ to denote the group element of
$\G$ obtained by replacing all occurrences
of the variable $*_i$ in the word $w(*_1,\ldots,*_d)$
with $x_i$.
Define a \Define{nilprogression of step $s$, length $k$ and rank $d$}
to be a set of the from
$$
A:=\{w(x_1,\ldots,x_d)a:w\in\Sigma_{< k+1}\}
$$
where $a,x_1,\ldots,x_d$ are elements in an $s$-step nilpotent group
$G$. If $|A|=|\Sigma_{< k+1}|$ then we call $A$
a \Define{non-degenerated} nilprogression.

\begin{Maintheorem}[Restricted van der Waerden Theorem
for nilprogressions]
\label{theorem:rest-nil-vdW}
For every $k\geq 1$ there exists a
$k$-step nilpotent group $(\G,\cdot)$ in two generators
and a set $V\subset\G$ with the property that $V$
does not contain any non-degenerated
nilprogressions of step $k$, length $k+1$
and rank $2$ but for any partition
of $V$ into finitely many classes, some class contains a
non-degenerated nilprogressions of step $k$,
length $k$ and rank $2$.
\end{Maintheorem}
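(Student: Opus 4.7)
The plan is to model the argument on the classical derivation of the restricted van der Waerden theorem from the Hales-Jewett theorem, with \cref{theorem:3} playing the role of HJ.

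\textbf{Setup.} Take $\G$ to be the free $k$-step nilpotent group on two generators $g_1,g_2$ and fix a rapidly growing IP-sequence $N:\FS\to\N$, e.g.\ $N(\alpha)=\sum_{n\in\alpha}M^n$ for $M$ large. Set $x_i(\alpha):=g_i^{N(\alpha)}$ and
$$
V:=\bigl\{w(x_1(\alpha),x_2(\alpha)):\alpha\in\FS,\ w\in\Sigma_{<k+1}\bigr\}.
$$
One then selects an idempotent filter $\F$ on $\G$ such that $V$ is piecewise $\F$-syndetic and such that the collection
$$
\P:=\bigl\{\alpha\mapsto w(x_1(\alpha),x_2(\alpha)):w\in\Sigma_{<k+1}\bigr\}\cup\{1_\G\}
$$
is a good collection of $\F$-measurable polynomial mappings in the sense of \cref{def:gcofmp}. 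The existence of such an $\F$ would be engineered by applying the Ellis-Numakura theorem to extract an idempotent in the smallest two-sided ideal of a suitable closed subsemigroup of $\beta\G$ supported near $V$.

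\textbf{Monochromatic side.} Given a finite partition $V=V_1\cup\cdots\cup V_r$, piecewise $\F$-syndeticity of $V$ passes (by \cref{rem:partition-regularity-pw-f-syndetic}) to some class $V_{j_0}$. Applying \cref{theorem:3} together with \cref{rem:equivalent-statement-theorem-2} to the polynomial mappings $P_w(\alpha):=w(x_1(\alpha),x_2(\alpha))\in\P$, $w\in\Sigma_{<k+1}$, yields $\alpha\in\FS$ and $a\in\G$ such that $\{w(h_1,h_2)a:w\in\Sigma_{<k+1}\}\subset V_{j_0}$, where $h_i:=x_i(\alpha)$. The lacunarity of $N$ keeps these $|\Sigma_{<k+1}|$ elements distinct, producing the desired monochromatic non-degenerated nilprogression of step $k$, length $k$ and rank $2$.

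\textbf{Avoidance side.} The non-existence of length-$(k+1)$ non-degenerated nilprogressions inside $V$ is checked in the abelianization $\G/[\G,\G]\cong\Z^2$. Such a nilprogression $\{u(h_1,h_2)b:u\in\Sigma_{<k+2}\}$ descends to a $(k+2)\times(k+2)$ translated grid in the lattice $\Z\bar h_1+\Z\bar h_2$, whereas the abelianization image of $V$ equals $\{(iN(\alpha),jN(\alpha)):0\leq i,j\leq k,\ \alpha\in\FS\}$, a union of axis-aligned $(k+1)\times(k+1)$ lattice squares whose scales $N(\alpha)$ grow rapidly. A geometric pigeonhole argument then shows that any $(k+2)\times(k+2)$ translated grid in $\Z^2$ must straddle two distinct scales in this union, which is incompatible with its lattice structure, yielding a contradiction.

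\textbf{Main obstacle.} The hardest step is the simultaneous choice of $\F$ and $\P$ in the setup. Axioms \ref{item:poly2} and \ref{item:poly3} of \cref{def:gcofmp} reduce to commutator calculations in the free $k$-step nilpotent group showing that conjugation by $x_i(\beta)$ and discrete derivatives in direction $\beta$ preserve the class of ``short $\Sigma_{<k+1}$-words in $x_1,x_2$'' up to an $\F$-small correction; the growth rate of $N$ must be calibrated to make both the abelianization geometry of the avoidance side and the good-collection conditions hold simultaneously, and establishing that an idempotent filter $\F$ with all these properties exists is the central technical difficulty.
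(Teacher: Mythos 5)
There is a genuine gap, and it sits at the heart of the construction: your choice of group. You take $\G$ to be the free $k$-step nilpotent group on $g_1,g_2$ and hope that lacunarity of $N$ makes the $|\Sigma_{<k+1}|$ elements $w(h_1,h_2)a$, $h_i=g_i^{N(\alpha)}$, pairwise distinct. But distinctness is obstructed by \emph{universal word identities}, which no growth condition on $N$ can repair. For example, for $k=2$ one has $h_1h_2h_2h_1=h_2h_1h_1h_2$ (both equal $h_1^2h_2^2[h_1,h_2]^{-2}$) in \emph{every} $2$-step nilpotent group, and both words lie in $\Sigma_{<k+1}$; so the configuration your monochromatic step produces is automatically degenerate. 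This is precisely the difficulty that dictates the paper's construction: instead of a free nilpotent group, the paper takes the concrete group generated by $S\colon p(x)\mapsto p(x+1)$ and $R\colon p(x)\mapsto p(x)+x^k$ acting on $\Z[x]$, and proves by evaluating words at $x^k$ that $w\mapsto w(R,S)$ is injective on $\Sigma_{<k+1}$ (using linear independence of $\{(x+m)^k:0\le m\le k\}$), while the linear dependence of $\{(x+m)^k:0\le m\le k+1\}$ rules out non-degenerate progressions of length $k+1$. The paper then passes to the direct sum $\G_\infty=\bigoplus_n\G_n$ and the IP-elements $S_\alpha=\prod_{n\in\alpha}S_n$, $R_\alpha=\prod_{n\in\alpha}R_n$, so that $\alpha\mapsto w(R_\alpha,S_\alpha)$ are polynomial mappings; nothing in your free-group-plus-powers setup plays this role.

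Two further gaps. First, your avoidance argument via the abelianization only treats nilprogressions whose images $\bar h_1,\bar h_2$ genuinely span a $(k+2)\times(k+2)$ grid; non-degeneracy in $\G$ does not prevent $\bar h_1$ or $\bar h_2$ from vanishing or being parallel (e.g.\ generators in $[\G,\G]$), in which case the grid collapses and the pigeonhole argument says nothing. In the paper, avoidance is not a property of $V$ but of the ambient group itself --- no non-degenerate nilprogression of length $k+1$ and rank $2$ exists there at all --- established by the same polynomial computation. Second, the existence of the idempotent filter $\F$ and the verification that $\P$ is a good collection cannot be delegated to the Ellis--Numakura theorem: that theorem supplies idempotent ultrafilters in closed subsemigroups, but it neither makes your $\P$ satisfy conditions \ref{item:poly2} and \ref{item:poly3} of \cref{def:gcofmp} nor makes $V$ piecewise $\F$-syndetic in a compatible way. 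The paper writes $\F$ down explicitly as the filter generated by the sets $U_\gamma[\G_\infty,\G_\infty]$ (idempotency checked through the abelian quotient $\G_\infty/[\G_\infty,\G_\infty]$), takes $V=U_{\{1\}}$ and $\P=\{\alpha\mapsto w(R_\alpha,S_\alpha):w\in\Sigma_{<k+1}\}$, and only then invokes \cref{theorem:3}. So while your overall architecture (apply \cref{theorem:3} to a tailor-made $\F$, $\P$, $V$) matches the paper, the concrete constructions you propose would fail at both the non-degeneracy and the avoidance steps.
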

We conjecture that analogues of \cref{theorem:rest-nil-vdW} for
nilprogressions of rank $d>2$ also hold and can be derived from
\cref{theorem:3}, however we don't attempt to prove this conjecture in this paper. (Extending our current proof to prove this generalization seems to require constructing a $k$-step nilpotent group in $d$ generators $x_1,\ldots,x_d$ where one can explicitly calculate all words $w(x_1,\ldots,x_d)\in \Sigma_{< k+1}$.)

\begin{proof}[Proof of \cref{theorem:rest-nil-vdW}]
Fix $k\geq 1$. Let $\Z[x]$ denote the collection of all polynomials
with integer coefficients, let $S:\Z[x]\to \Z[x]$ denote
the map $S(p(x))=p(x+1)$ and let $R:\Z[x]\to\Z[x]$ denote the
map $R(p(x))=p(x)+x^k$. Let $\G$ denote the group
generated by $S$ and $R$. It is well known and straight-forward
to check that $\G$ is a $k$-step nilpotent group.

Let $\Sigma_{< k+1}$
denote the collection of all words in the letters
$*_1$ and $*_2$ in which each variable $*_1$ and $*_2$
occurs at most $k$ times.
We now claim that for all $w_1,w_2\in\Sigma_{< k+1}$ if $w_1(R,S)= w_2(R,S)$ then $w_1(*_1,*_2)= w_2(*_1,*_2)$.

To prove this claim we start with a remark. For any $w\in\Sigma_{< k+1}$ one can write
$$
w(R,S)=
S^{v_{0}}R^{u_{1}}S^{v_{1}}R^{u_{2}}S^{v_{2}}\cdots
R^{u_{\ell}}S^{v_{\ell}}R^{u_{\ell+1}}
$$
with the conditions
\begin{itemize}
\item
$u_{\ell+1},v_{0}\in\{0,1,\ldots,k\}$;
\item
$\sum_{j=0}^\ell v_{j}\leq k$;
\item
$\sum_{j=1}^{\ell+1} u_{j}\leq k$.
\end{itemize}
Put $m_{j}=v_{0}+v_{1}+\ldots+v_{j-1}$.
Then the polynomial $x^k$ evaluated by the map $w(R,S)$ yields
$$
w(R,S)x^k=
u_{1} (x+m_{1})^k +\ldots+
u_{\ell+1}(x+m_{\ell+1})^k.
$$
Observe that $\{(x+m)^k:m\in \{0,1,\ldots,k\}\}$
forms a linearly independent subset of $\Z[x]$.

Now, let $w_1,w_2 \in \Sigma_{< k+1}$ with $w_1(R,S)=w_2(R,S)$.
From
$$
w_1(R,S)x^k=
u_{1,1} (x+m_{1,1})^k +\ldots+
u_{1,\ell+1}(x+m_{1,\ell+1})^k
$$
and
$$
w_2(R,S)x^k=
u_{2,1} (x+m_{2,1})^k +\ldots+
u_{2,\ell+1}(x+m_{2,\ell+1})^k
$$
it follows that $w_1(R,S)=w_2(R,S)$ if and only if $u_{1,j}=u_{2,j}$
and $m_{1,j}=m_{2,j}$ for all $j$. However, from $m_{1,j}=m_{2,j}$
it follows that $v_{1,j}=v_{2,j}$ and therefore
$w_1(*_1,*_2)=w_2(*_1,*_2)$.
This finishes the proof of the claim.

This shows that $\G$ admits non-degenerated
nilprogressions of length $k$ and rank $2$.
On the other hand, it is clear that $\G$ does not admit
non-degenerated nilprogressions of length $(k+1)$ and rank $2$, because the family
$\{(x+m)^k:m\in \{0,1,\ldots,k+1\}\}$
does not form a linearly independent subset of $\Z[x]$.

Next, let $(\G_n)_{n\in\N}$ be $\N$-many identical copies of
$\G$, let $R_n$ and $S_n$ denote identical copies of the
maps $R$ and $S$ and suppose $\G_n$ is generated by $R_n$ and $S_n$.
Let $\G_\infty:=\bigoplus_{n\in\N}\G_n$.
For convenience we identify $\G_n$ with its
embedding into $\G_\infty$, which allows us to view $R_n$ and $S_n$
as elements in $\G_\infty$.

For $\alpha\in\FS$ define
$$
S_\alpha:=\prod_{n\in\alpha}S_n\qquad\text{and}\qquad
R_{\alpha}:=\prod_{n\in\alpha}R_n.
$$
For $\gamma\in\FS$ let
$$
U_\gamma:=\left\{S_{\alpha}R_{\beta}:\alpha,\beta>\gamma \right\}
$$
and let
$$
\F:=\{U\subset \G_\infty:\exists\gamma~\text{such that}~
U_{\gamma}[\G_\infty,\G_\infty]\subset U\}.
$$
Note that $\F$ is an idempotent filter. This is easy to
see if one interprets $\F$ as the pull-back
of an idempotent filter on the abelian group
$\G_\infty/[\G_\infty,\G_\infty]$ under the natural quotient map.

Let $V:=U_{\{1\}}$.
We make the following claim, which will finish the proof of
\cref{theorem:rest-nil-vdW}: 
For any partition of $V$ into finitely many classes, some class contains a non-degenerated nilprogression of length $k$ and rank $2$.

Define
$$
\P:=\{\alpha\mapsto w(R_\alpha,S_\alpha): w\in\Sigma_{< k+1} \}.
$$
Then $\P$ is a good collection of $\F$-measurable polynomial mappings, which can be shown by routine (but somewhat lengthly) calculation.
Therefore, using \cref{theorem:3}, for every
partition of $V$ into finitely many classes we can find $a\in\G_\infty$
and $\alpha\in\FS$ such that 
$$
\{w(R_\alpha,S_\alpha) a : w\in\Sigma_{< k+1} \}
$$
is contained in one single class.
However, the set $\{w(R_\alpha,S_\alpha): w\in\Sigma_{< k+1} \}$
is in a one-to-one correspondence with the set
$\{w(R,S): w\in\Sigma_{< k+1} \}$ and hence for all $w_1,w_2\in\Sigma_{< k+1}$ with
$w_1(*_1,*_2)\neq w_2(*_1,*_2)$ we have $w_1(R_\alpha,S_\alpha)\neq w_2(R_\alpha,S_\alpha)$. In particular, this means that the nilprogression
$\{a w(R_\alpha,S_\alpha): w\in\Sigma_{< k+1} \}$ is non-degenerated, which
finishes the proof.
\end{proof}

\section{Proof of \cref{theorem:3}}
\label{section:nilpotent-PHJ}

We begin this section by discussing various properties of
polynomial mappings as defined in \cref{section:intro}.
It is shown in \cite{BL03} that for nilpotent groups $\G$ given polynomial mappings $P,Q:\FS\to\G$ both the reciprocal $P^{-1}$ and product $PQ$ are polynomial mappings.

We are particularly interested in collections of $\F$-measurable polynomial mappings
for idempotent filters $\F$ on a nilpotent group $\G$.
Let us give an example of such a setup.

\begin{Example}
\label{example:F-measurable-polynomials-on-Heisenberg}
Let $(x_n)$ and $(y_n)$ be sequences of positive integers. Let
$\G$ denote the discrete Heisenberg group, i.e.,
$$
\G=\left\{\begin{pmatrix}
1&a&c\\
&1&b\\
&&1
\end{pmatrix}:a,b,c\in\Z\right\},
$$
and recall that $\G$ is $2$-step nilpotent. Let
\[
V_N:=\left\{
\begin{pmatrix}
1&\sum\limits_{N\leq i\leq M}a_i x_i&\sum\limits_{N\leq\max\{i,j\}\leq M}c_{ij} x_iy_j\\
&1&\sum\limits_{N\leq j\leq M} b_j y_j\\
&&1
\end{pmatrix}:
\begin{array}{c}
M\geq N,\\
a_i,b_j\in \{0,1\},\\
c_{ij}\in \{-1,0,1\}
\end{array}
\right\}.
\]
Then the filter $\F=\{V\subset \G:\exists N\in\N~\text{s.t.}~V_N\subset V\}$
is an idempotent filter on $\G$, as can be checked by
straightforward calculations. For $\alpha,\beta\in\FS$ define
\begin{equation}
\label{eqn:x_alpha-and-y_alpha}
x_\alpha:=
\begin{pmatrix}
1&\sum\limits_{i\in\alpha}x_i&0\\
&1&0\\
&&1
\end{pmatrix},\qquad
y_\alpha:=
\begin{pmatrix}
1&0&0\\
&1&\sum\limits_{j\in\alpha}y_j\\
&&1
\end{pmatrix}
\end{equation}
and
\begin{equation}
\label{eqn:x_alpha-beta}
z_{\alpha\times\beta}:=
\begin{pmatrix}
1&0&\sum\limits_{(i,j)\in\alpha\times\beta}x_iy_j\\
&1&0\\
&&1
\end{pmatrix}.
\end{equation}
On the one hand, for any fixed $\beta\in\FS$
the maps $\alpha\mapsto x_\alpha$, $\alpha\mapsto y_\alpha$,
$\alpha\mapsto z_{\alpha\times\beta}$ and $\alpha\mapsto z_{\beta\times\alpha}$
are $\F$-measurable polynomial mappings of degree $1$.
On the other hand, the maps $\alpha\mapsto x_\alpha y_\alpha$,
$\alpha\mapsto  y_\alpha x_\alpha$
and $\alpha\mapsto z_{\alpha\times\alpha}$ are
$\F$-measurable polynomial mappings of degree $2$.
\end{Example}
 
In the following, we refer to any
finite collection of polynomial mappings
$\A=\{P_1,\ldots,P_m\}$ as a \Define{system}.

\begin{Proposition}[\cite{BL03}]
\label{proposition:weight}
Suppose $(\G,\cdot)$ is a nilpotent group.
There exists a set $\W$, called the \Define{set of weights},
endowed with a linear ordering $<$, such that for every system $\A$ there
is an element $w(\A)\in\W$ associated to $\A$,
referred to as the \Define{weight} of $\A$,
having the following
properties:
\begin{enumerate}	
[label=(\roman{enumi}),ref=(\roman{enumi}),leftmargin=*]
\item
Let $\Dshift{\beta}P$ denote the map
$$\Dshift{\beta}P(\alpha):=P^{-1}(\beta)P(\alpha\cup\beta).$$
For every system $\A$ there exists $\beta_0\in\FS$ such that for all $\beta\in\FS$ with $\beta>\beta_0$ one has $w(\A)=w(\A\cup \Dshift{\beta}\A)$.
\item
\label{weight7}
Let $P_{\low}\in\A$ and let $\A'=\{PP_{\low}^{-1}: P\in\A\}$.
If $w(\{P_{\low}\})\leq w(\{P\})$ for all
$P\in\A$ and $P_{\low}$ does not equal $1_\G$ $\FF$-a.e.~then $w(\A')<w(\A)$;
\item
The set $\W$ contains a minimal element $e_{\W}$ and
$w(\A)=e_{\W}$ if and only if $P= 1_{\G}$ $\FF$-a.e.~for all $P\in\A$;
\item
\label{weight9}
If $c_1,\ldots,c_m \in\G$, $\A$ is a system and
$\A'=\bigcup_{i=1}^m \A^{c_i}=\bigcup_{i=1}^m c_i^{-1}\A c_i$ then $w(\A)=w(\A')$.
\end{enumerate}
\end{Proposition}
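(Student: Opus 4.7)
The plan is to construct the weight via the Polynomial Exhaustion Technique (PET) induction framework of Bergelson--Leibman, extended to the nilpotent setting. I would take $\W$ to be the set of finite tuples of nonnegative integers with descending-sort lexicographic order; this is a linearly (in fact well-) ordered set containing a minimum element $e_\W$, and is rich enough to encode both polynomial-degree and central-series information simultaneously. To each $P$ that is not $1_\G$ $\FF$-a.e.~I would inductively assign a \emph{type} $\tau(P) \in \W$. For a degree-$1$ (IP) mapping, $\tau(P)$ records the depth of $P$ in the lower central series $\G = \G_1 \supset \G_2 \supset \cdots \supset \G_{s+1} = \{1_\G\}$ of the $s$-step nilpotent group $\G$. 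For higher-degree $P$, $\tau(P)$ is built recursively from the types of the discrete derivatives $D_\beta P$ (generic $\beta$), in line with part~(b)(iii) of \cref{def:pm}. The weight of a system $\A$ is then the descending-sorted tuple of types $(\tau(P) : P \in \A,~P\ne 1_\G~\FF\text{-a.e.})$. With this definition property~(iii) holds on the nose: $w(\A)=e_\W$ iff every $P\in\A$ is trivial.

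I would verify (iv) and (i) next. For (iv), use $P^c = P \cdot [P,c]$ and iterate: the commutator factor lives in $\G_2$ and later commutators live in still higher terms of the lower central series, so conjugation modifies $P$ only in coordinates of $\tau$ that sit strictly below its leading coordinate, leaving $\tau(P^{c_i})=\tau(P)$. Hence the multiset of types of $\bigcup_i \A^{c_i}$ coincides with that of $\A$. For (i), a direct calculation shows $\tau(\Dshift{\beta} P)=\tau(P)$ once $\beta$ is generic enough (this is essentially the defining property of a polynomial mapping modulo the $\FF$-a.e.~equivalence), so the enlargement $\A\cup \Dshift{\beta}\A$ does not change the descending-sorted tuple.

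Property (ii), the lowering step, is the heart of PET. If $P_{\low}\in\A$ is nontrivial with minimal singleton weight, I would show that for each $P\in\A$ with $\tau(P)=\tau(P_{\low})$ the product $P \cdot P_{\low}^{-1}$ either becomes $1_\G$ $\FF$-a.e.~or acquires a strictly smaller type (the leading coordinates cancel), while for each $P$ with $\tau(P)>\tau(P_{\low})$ the product $P\cdot P_{\low}^{-1}$ has the same type as $P$ up to commutator corrections living at strictly lower levels of $\tau$. Taking the descending-sorted multiset of these new types, at least one minimum entry of $w(\A)$ drops strictly, so $w(\A')<w(\A)$ in the lex order on $\W$.

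The main obstacle is controlling (ii) cleanly in the nilpotent setting: right-multiplication by $P_{\low}^{-1}$ inevitably produces commutators arising from the noncommutativity of $\G$, and one must certify that these commutators contribute only types strictly below $\tau(P_{\low})$ rather than spilling into the top of $\W$. This is exactly why the central-series depth must be baked into $\tau$ from the outset, and the verification requires a nested induction on (polynomial degree, nilpotent step) together with the observation that commutators $[u,c]$ with fixed $c$ drop the central-series depth of $u$ by one. This nested induction --- essentially the technical core of the argument in~\cite{BL03} --- is what makes the PET scheme for nilpotent groups substantially more delicate than its abelian counterpart, but once it is in place properties (i), (iii) and (iv) all fall out routinely from the construction.
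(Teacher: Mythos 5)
The paper itself does not prove \cref{proposition:weight}; it is quoted from \cite{BL03}, so your sketch has to be measured against the Bergelson--Leibman PET construction, and there it has a genuine gap. You define $w(\A)$ as the descending-sorted tuple of the types $\tau(P)$ of the individual nontrivial elements $P\in\A$. No definition of this form (one entry per element of $\A$) can satisfy properties (i), (ii) and (iv) simultaneously. For a typical polynomial mapping $P$ and admissible $\beta$, $\Dshift{\beta}P$ is a \emph{different} map of the same type as $P$, so $\A\cup\Dshift{\beta}\A$ has strictly more nontrivial elements of top type than $\A$ and your sorted tuple lengthens: property (i) fails. The same happens for (iv), since $\bigcup_{i}\A^{c_i}$ can have up to $m|\A|$ distinct elements even though each $P^{c_i}$ has the same type as $P$. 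If you try to repair this by forgetting multiplicities and taking $w(\A)$ to be the \emph{set} of types occurring in $\A$, then (ii) fails: already in the model case $\G=\Z$, for $\A=\{x,\,2x\}$ with $x$ an IP mapping and $P_{\low}=x$, the system $\A'=\{1_\G,\,x\}$ occupies exactly the same set of types as $\A$, so there is no strict decrease even though $P_{\low}$ is nontrivial.

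The missing idea --- and the heart of the weight construction in \cite{BL03}, going back to the original PET induction --- is that the weight must count, for each type/level, the number of \emph{equivalence classes} of polynomials of that type, where $P\sim Q$ precisely when $PQ^{-1}$ has strictly lower type (``same leading part''). With class-counting, (i) holds because $\Dshift{\beta}P\cdot P^{-1}$ has lower type for $\beta$ far enough out, so $\Dshift{\beta}P\sim P$ and no new classes are created; (iv) holds because $P^{c}P^{-1}$ is a commutator expression of lower weight (here the lower-central-series bookkeeping you describe does enter), so all conjugates of $P$ lie in one class; and (ii) holds because multiplying on the right by $P_{\low}^{-1}$ removes the class of $P_{\low}$ from its (minimal occupied) level without pushing any class to a higher level, so the count at that level drops by one and the weight strictly decreases. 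Your verification of (ii) is essentially the correct mechanism, but without the equivalence relation the statements you assert for (i) and (iv) are simply false for your $\W$, and the proposal as written does not yield the proposition. The nested induction on degree and nilpotency class that you flag as the main obstacle is indeed needed, but only \emph{after} the class-counting structure of the weight is in place.
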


\begin{Definition}[$\P$-minimal systems]
Let $\P$ be a good collection of $\F$-measurable polynomial mappings
for an idempotent filter $\F$ on a nilpotent group
$(\G,\cdot)$. We say that a system $\A$ is a
\Define{$\P$-minimal system} with \Define{$\P$-minimal element}
$P_{\min}$ if $P_{\min}$ is contained in $\A$ and
$\A P_{\min}^{-1}=\{PP_{\min}^{-1}:P\in\A\}$ is a subset of $\P$.
\end{Definition}

\begin{Lemma}
\label{lemma:f-minimal-conjugation}
Let $\P$ be a good collection of $\F$-measurable
polynomial mappings for an idempotent filter $\F$ on a
nilpotent group $(\G,\cdot)$. 
Suppose $\A$ is a $\P$-minimal systems with
$\P$-minimal element $P_{\min}$.
Then there exists $C\in\F$ such that for
all finite non-empty subsets $G\subset C$
the system $$\bigcup_{c\in G} \A^c =\{P^c: P\in\A,~c\in G\}$$
is $\P$-minimal with $\P$-minimal
element $P_{\min}$.
\end{Lemma}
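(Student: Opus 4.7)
My plan is to apply property (III) of a good collection of $\F$-measurable polynomial mappings after rewriting the conjugate $P^c P_{\min}^{-1}$ in the form required by that property. For each $P\in\A$, set $R_P := PP_{\min}^{-1}$; by the $\P$-minimality of $\A$ we have $R_P\in\P$. A direct computation, using $R_P^c = P^c (P_{\min}^c)^{-1}$ together with $[c, P_{\min}^{-1}] = c^{-1}P_{\min}cP_{\min}^{-1} = P_{\min}^c P_{\min}^{-1}$, yields the identity
$$
P^c P_{\min}^{-1} \;=\; R_P^c \cdot [c, P_{\min}^{-1}] \qquad\text{for every } c\in\G,
$$
which is precisely the left-hand side appearing in property (III), with parameters $R_P\in\P$ and $P_{\min}^{-1}$ in the role of the second polynomial.

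For each $P\in\A$, property (III) then produces a set $C_P\in\F$ such that $R_P^c [c, P_{\min}^{-1}]\in\P$ for every $c\in C_P$. Because $\A$ is finite and $\F$ is closed under finite intersections, $C := \bigcap_{P\in\A} C_P$ belongs to $\F$, and for every $c\in C$ and every $P\in\A$ we obtain $P^c P_{\min}^{-1}\in\P$. For any finite non-empty subset $G\subset C$ this immediately gives
$$
\Bigl(\bigcup_{c\in G}\A^c\Bigr)P_{\min}^{-1} \;=\; \{P^c P_{\min}^{-1} : P\in\A,\ c\in G\} \;\subset\; \P,
$$
which is the defining property of being $\P$-minimal with $\P$-minimal element $P_{\min}$.

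The main obstacle is justifying that property (III) applies with $P_{\min}^{-1}$ as its second argument: (III) as stated requires both arguments to lie in $\P$, whereas $\P$-minimality of $\A$ only provides $PP_{\min}^{-1}\in\P$ for $P\in\A$, not $P_{\min}^{-1}$ itself. I expect this to be handled either by an implicit closure property of good collections (for example, closure of $\P$ under inverses, which is automatic in the concrete examples of \cref{example:F-measurable-polynomials-on-Heisenberg} and \cref{section:rexvdW}), or by expanding $[c, P_{\min}^{-1}]$ via nilpotent commutator identities into a product of factors already in $\P$, to each of which (III) applies directly; the nilpotency of $\G$ ensures that such an expansion terminates. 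Once this point is addressed, the filter-theoretic finite-intersection step above is entirely routine.
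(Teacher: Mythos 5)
Your proposal is essentially the paper's own proof: the same decomposition $P=R_PP_{\min}$ with $R_P\in\P$, the same identity $P^cP_{\min}^{-1}=R_P^c\,[c,P_{\min}^{-1}]$, an application of property (III) for each $R_P$, and a finite intersection over the finitely many elements of $\A$ to get $C\in\F$. The scruple you raise in your last paragraph about $P_{\min}^{-1}$ not itself being known to lie in $\P$ is not addressed in the paper either --- its proof invokes (III) with $P_{\min}^{-1}$ in the second slot exactly as you do --- so relative to the paper's argument you are not missing any step.
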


\begin{proof}
First, note that every element $P\in\A$ can be written as
$RP_{\min}$ for some $R\in\P$. Then,
using part \ref{item:poly2} of \cref{def:gcofmp}, for every such $R$
we can find a set $C_R\in\F$ such that
$R^c [c,P_{\min}^{-1}]\in\P$ for all $c\in C_R$.
Let $C$ denote the intersection $\bigcap_RC_R$. Note that the set
$C$ belongs to $\F$, as it is an intersection of finitely many
sets contained in $\F$.

To finish the proof it suffices to show that
for every $P\in\A$ and for every $c\in C$ the
polynomial mapping $P^cP_{\min}^{-1}$ belongs to $\P$.
However,  simple algebra manipulations
show that
$P^cP_{\min}^{-1}=R^c [c,P_{\min}^{-1}]$, which finishes the proof.
\end{proof}

\begin{Lemma}
\label{lemma:f-minimal-derivative}
Let $\P$ be a good collection of $\F$-measurable
polynomial mappings for an idempotent filter $\F$ on a
nilpotent group $(\G,\cdot)$. 
Suppose $\A$ is a $\P$-minimal systems with
$\P$-minimal element $P_{\min}$ containing the constant polynomial
$1_{\G}$.
Then there exists $C\in\F$ and $\beta_0\in\FS$ with the following property: For all $u\in C$ and $\beta>\beta_0$ such that $P(\beta)u\in C$ for all $P\in \A$, we have that
$$
\A':=\A^u\cup \Dshift{\beta}\A^u
$$
is $\P$-minimal with $\P$-minimal
element $P_{\min}^{\new}(\alpha)$, which we can take to be
$\Dshift{\beta}P_{\min}^u$.
\end{Lemma}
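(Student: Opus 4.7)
The plan is to verify the two defining conditions of $\P$-minimality for $\A'$. First, $\Dshift{\beta}P_{\min}^u$ lies in $\Dshift{\beta}\A^u\subset\A'$ because $P_{\min}^u\in\A^u$, so it is a legitimate candidate for a $\P$-minimal element. Second, I must show that $Q\cdot(\Dshift{\beta}P_{\min}^u)^{-1}\in\P$ for every $Q\in\A'$, which splits into two cases: $Q=P^u$ with $P\in\A$, and $Q=\Dshift{\beta}P^u$ with $P\in\A$. By $\P$-minimality of $\A$, every $P\in\A$ can be written as $RP_{\min}$ with $R\in\P$; and since $1_\G\in\A$, the special case $P=1_\G$ yields $P_{\min}^{-1}\in\P$.

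The argument rests on three elementary identities, each verified by a pointwise computation using $X(\alpha\cup\beta)=X(\alpha)\,D_\beta X(\alpha)\,X(\beta)$. For any polynomial mapping $X$ and any fixed $u\in\G$, conjugation by the constant $u$ commutes with $\Dshift{\beta}$, so $(\Dshift{\beta}X)^u=\Dshift{\beta}(X^u)$. Second, $\Dshift{\beta}X=(X\cdot D_\beta X)^{X(\beta)}$. Third, and most crucially,
$$P_{\min}\cdot(\Dshift{\beta}P_{\min})^{-1}=D_\beta P_{\min}^{-1},$$
which is checked by expanding both sides at an arbitrary $\alpha$. These identities rewrite Case 1 as
$$P^u(\Dshift{\beta}P_{\min}^u)^{-1}=\bigl(RP_{\min}(\Dshift{\beta}P_{\min})^{-1}\bigr)^u=\bigl(R\cdot D_\beta P_{\min}^{-1}\bigr)^u,$$
and Case 2 (after a short calculation cancelling $P_{\min}(\alpha\cup\beta)$) as
$$\Dshift{\beta}P^u(\Dshift{\beta}P_{\min}^u)^{-1}=\bigl((\Dshift{\beta}R)^{P_{\min}(\beta)}\bigr)^u=(R\,D_\beta R)^{P(\beta)u},$$
where the second equality applies identity two to $\Dshift{\beta}R$ and uses $R(\beta)P_{\min}(\beta)=P(\beta)$.

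Both expressions now sit in the $(R'\,D_\beta P')^c$ form that condition \ref{item:poly3} of \cref{def:gcofmp} was designed to handle. For Case 1, applying that condition to the pair $(R,P_{\min}^{-1})\in\P\times\P$ produces $C^{(1)}_R\in\F$ and $\mathcal{T}^{(1)}_R\in\FF$ such that $(R\,D_\beta P_{\min}^{-1})^u\in\P$ whenever $u\in C^{(1)}_R$ and $\beta\in\mathcal{T}^{(1)}_R$. For Case 2, applying it to $(R,R)\in\P\times\P$ produces $C^{(2)}_R\in\F$ and $\mathcal{T}^{(2)}_R\in\FF$ such that $(R\,D_\beta R)^c\in\P$ whenever $c\in C^{(2)}_R$ and $\beta\in\mathcal{T}^{(2)}_R$. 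I then set $C$ to be the finite intersection of all the sets $C^{(1)}_R$ and $C^{(2)}_R$ as $R$ ranges over the finitely many $R\in\P$ for which $RP_{\min}\in\A$, which still lies in $\F$; and I pick $\beta_0\in\FS$ large enough that $\{\beta\in\FS:\beta>\beta_0\}$ is contained in each $\mathcal{T}^{(1)}_R\cap\mathcal{T}^{(2)}_R$, which is possible since each such set is in $\FF$. The hypothesis ``$u\in C$ and $P(\beta)u\in C$ for all $P\in\A$'' then makes both cases apply simultaneously. The principal obstacle is spotting the identity $P_{\min}(\Dshift{\beta}P_{\min})^{-1}=D_\beta P_{\min}^{-1}$; without it, Case 1 produces an unwieldy product that is not visibly of the $(R\,D_\beta P)^c$ form, and condition \ref{item:poly3} cannot be invoked.
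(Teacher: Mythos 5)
Your proof is correct and follows essentially the same route as the paper: write each $P\in\A$ as $RP_{\min}$, reduce $P^u(\Dshift{\beta}P_{\min}^u)^{-1}$ to $(R\,D_\beta P_{\min}^{-1})^u$ and $\Dshift{\beta}P^u(\Dshift{\beta}P_{\min}^u)^{-1}$ to a conjugate of $R\,D_\beta R$, and then invoke condition \ref{item:poly3} of \cref{def:gcofmp} together with finite intersections over the finitely many $R\in\A P_{\min}^{-1}$ to produce $C$ and $\beta_0$. In fact your bookkeeping of the conjugating element as $P(\beta)u$ (rather than $P_{\min}(\beta)u$) and your explicit use of $1_\G\in\A$ to get $P_{\min}^{-1}\in\P$ make the argument slightly more careful than the paper's terse computation, and they match precisely the hypothesis ``$P(\beta)u\in C$ for all $P\in\A$'' in the lemma.
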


\begin{proof}
First, note that every element $P\in\A$ can be written as
$RP_{\min}$ for some $R\in\P$. Define $c:=P_{\min}(\beta)u$.
On the one hand we have
\begin{eqnarray*}
\Dshift{\beta}P^u (P_{\min}^{\new})^{-1}
&=&c^{-1}P_{\min}(\beta)P^{-1}(\beta)
P(\alpha\cup\beta)P_{\min}^{-1}(\beta\cup\alpha)c\\
&=&c^{-1}\Dshift{\beta}(PP_{\min}^{-1})c\\
&=&\Dshift{\beta}R^c\\
&=&(RD_\beta R)^c\\
\end{eqnarray*}
On the other hand, we have
\begin{eqnarray*}
P^u (P_{\min}^{\new})^{-1}&=&
\big(u^{-1}P(\alpha)u\big)
\big(u^{-1}P_{\min}^{-1}(\alpha\cup\beta)P_{\min}(\beta)u\big)\\
&=&
u^{-1}R(\alpha)P_{\min}(\alpha)P_{\min}^{-1}(\beta\cup\alpha)P_{\min}(\beta)u\\
&=& (R D_\beta P_{\min}^{-1})^u
\end{eqnarray*}
Now the claim follows directly from part \ref{item:poly3} of \cref{def:gcofmp}.
\end{proof}

For the proof of \cref{theorem:3} we will use
PET-induction, a technique that was developed in \cite{BL96}, and which
proceeds by induction on the weight of systems as
defined in \cref{proposition:weight}.
However, this inductive process requires us to replace
\cref{theorem:3} with the stronger
\cref{theorem:VdW-fsyndetic-2} below, so that at every inductive step
we are able to rely on a strong enough induction hypothesis.

\begin{Maintheorem}\label{theorem:VdW-fsyndetic-2}
Let $\P$ be a good collection of $\F$-measurable polynomial mappings
for an idempotent filter $\F$ on a nilpotent group
$(\G,\cdot)$.
Let $\A$ be a $\P$-minimal system with $\P$-minimal element
$P_{\min}$, partitioned into three classes,
$\A=\A^{-}\cup \{1_\G\}\cup \A^{+}$. Then
for all $\beta \in \FS$, for all $V\in\F$ and for all piecewise
$\F$-syndetic sets $A$ there
exist $\alpha\in \FS$ with
$\alpha>\beta$ and $v\in V$ such that 
\begin{align*} 
P(\alpha)v\in V,\qquad\qquad&\forall~P\in \A^-,
\\
P(\alpha)v\in A,\qquad\qquad&\forall~P\in \{1_\G\}\cup \A^+.
\end{align*}
\end{Maintheorem}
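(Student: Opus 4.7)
The plan is to establish \cref{theorem:VdW-fsyndetic-2} by PET induction on the weight $w(\A)$ from \cref{proposition:weight}. Throughout, set $T := \FilterClosure{\F}$.

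\textbf{Base case:} $w(\A) = e_{\W}$. Here part (iii) of \cref{proposition:weight} guarantees that every $P \in \A$ is $\F$-a.e.~equal to $1_\G$. By $\F$-measurability of each $P$, the set $\bigcap_{P \in \A} P^{-1}(\{1_\G\})$ lies in $\FF$, so we may pick $\alpha$ in this intersection with $\alpha > \beta$. Then $P(\alpha) = 1_\G$ for all $P \in \A$ and it suffices to produce $v \in V \cap A$. This is non-empty because any $p \in K(T) \cap \overline{A}$ (which exists by piecewise $\F$-syndeticity of $A$) extends $\F$ and contains both $V$ and $A$.

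\textbf{Inductive step.} Assume the conclusion for all $\P$-minimal systems of weight strictly less than $w(\A)$. The core strategy is a color focusing argument supported by \cref{lemma:f-minimal-conjugation} and \cref{lemma:f-minimal-derivative}. Pick $p \in K(T) \cap \overline{A}$; by \cref{theorem:SZZ-2.2}, the set $A/p$ is $\F$-syndetic, and all sets $A' \in p$ remain piecewise $\F$-syndetic. We iteratively construct $\alpha_1 < \cdots < \alpha_r$ (all greater than $\beta$) and $v_1, \ldots, v_r \in V$, each time working with a conjugated and differentiated $\P$-minimal system $\A_i$ of weight strictly less than $w(\A)$, to which the inductive hypothesis applies. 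We label each $v_i$ by a ``color'' from a finite palette recording, for instance, which translate of $A/p$ captures the configuration. Once $r$ exceeds the palette size, pigeonhole produces indices $i < j$ with matching colors; combining these two steps through \cref{lemma:f-minimal-conjugation} and \cref{lemma:f-minimal-derivative} (with a derivative direction built from $\alpha_i$ and $\alpha_j$ and a conjugator built from $v_i$) yields a single pair $(\alpha, v)$ that satisfies all membership requirements at once. Partition regularity of piecewise $\F$-syndetic sets (\cref{rem:partition-regularity-pw-f-syndetic}) keeps the inductive hypothesis applicable after restricting to any color class.

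\textbf{Main obstacle.} The principal difficulty is the combinatorial bookkeeping. Each step of the color focusing must preserve $\P$-minimality of the evolving system, which requires invoking conditions \ref{item:poly2} and \ref{item:poly3} of \cref{def:gcofmp} through the two lemmas; it must keep the conjugators within the sets $C \in \F$ furnished by those conditions; and it must balance the weaker requirement $P(\alpha)v \in V$ for $P \in \A^-$ against the stronger requirement $P(\alpha)v \in A$ for $P \in \{1_\G\} \cup \A^+$. The three-way partition in the statement is precisely what makes this balance feasible: as the induction peels off polynomials of the highest weight, they can be demoted from $\A^+$ to $\A^-$, absorbing the ``lower-order corrections'' generated by the derivative step into the larger set $V$ rather than requiring them to land in $A$.
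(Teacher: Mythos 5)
Your overall strategy---pass to $A/p$ for some $p\in K(\FilterClosure{\F})\cap\overline{A}$ via \cref{theorem:SZZ-2.2}, run a color focusing argument over a finite palette of translates, and preserve $\P$-minimality along the way with \cref{lemma:f-minimal-conjugation,lemma:f-minimal-derivative}---is the same as the paper's. But there is a genuine gap in how you set up the PET induction. You induct on $w(\A)$ and assert that the conjugated and differentiated systems $\A_i$ produced during the focusing have weight strictly less than $w(\A)$; this is false: by \cref{proposition:weight}, parts (i) and (iv), adjoining derivatives $\Dshift{\beta}\A$ and conjugating preserve the weight. The only weight-lowering mechanism is the one your sketch never describes: pick $P_{\low}\in\A^+$ of lowest weight \emph{within the positive class}, divide the whole system by it, and re-partition, taking $\{PP_{\low}^{-1}:P\in\A^-\cup\{1_\G\}\}$ as the new negative class and $\{PP_{\low}^{-1}:P\in\A^+\setminus\{P_{\low}\}\}$ as the new positive class. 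Part \ref{weight7} of \cref{proposition:weight} then controls only the weight of the new positive class; since $P_{\low}$ need not be of minimal weight in all of $\A$ (elements of $\A^-$ may have smaller weight), the weight of the new system as a whole is not controlled, so an induction on $w(\A)$ cannot close. This is precisely why the paper inducts on $w(\A^+)$ alone, with base case $\A^+=\emptyset$---a case that is not trivial (it uses $\F$-measurability of the maps $P^cP_{\min}^{-1}$ to find $\alpha$ and then a translation argument to produce the point), and which your base case $w(\A)=e_{\W}$ (all polynomials trivial) does not subsume.

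Two smaller points. First, your base-case justification that $\bigcap_{P\in\A}P^{-1}(\{1_\G\})\in\FF$ ``by $\F$-measurability'' is off: measurability only yields $P^{-1}(V\cup\{1_\G\})\in\FF$ for $V\in\F$; what you actually need (and have) is that each $P$ equals $1_\G$ $\FF$-a.e., which is part (iii) of \cref{proposition:weight}. Second, the bookkeeping you defer is where the content lies: the finite set $G$ must be extracted from the $\F$-syndeticity of $A'=A/p$ and placed simultaneously inside $V'/\F$ and inside the set $C\in\F$ of \cref{lemma:f-minimal-conjugation}; the object fed back into the induction hypothesis at each focusing stage is the divided system $\mathscr{B}_s$, not $\A_s$ itself; and the final transfer from $A'=A/p$, $V'=V/p$ back to $A$, $V$ must be done simultaneously for both classes by choosing a single $N\in p$ and $v\in wN$, which your sketch does not address.
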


\cref{theorem:3} is indeed an immediate consequence of
\cref{theorem:VdW-fsyndetic-2}, because one can choose $\A^{-}$
to be the empty set.

The following lemma will be instrumental in
proving \cref{theorem:VdW-fsyndetic-2}.

\begin{Lemma}[Color Focusing]\label{lemma:color-focusing}
Let $\P$ be a good collection of $\F$-measurable polynomial mappings
for an idempotent filter $\F$ on a nilpotent group
$(\G,\cdot)$.
Suppose $\A=\A^{-}\cup \{1_\G\}\cup \A^{+}$ is a given $\P$-minimal
system with $\P$-minimal element $P_{\min}$ and assume that
\cref{theorem:VdW-fsyndetic-2} has already
been proven for all $\P$-minimal systems
$\mathscr{B}=\mathscr{B}^-\cup\{1_{\G}\}\cup \mathscr{B}^+$ with
$w(\mathscr{B}^+)<w(\A^+)$. Let  $G$ be a finite non-empty subset of
$\G$ such that $\A_0=\bigcup_{c\in G} \A^c$ is also a
$\P$-minimal system with the same $\P$-minimal element $P_{\min}$.
Assume $B$ is a subset of $\G$ such that
$G^{-1}B\in\F$. Then, for every $s\geq 1$, one of the following two
cases holds:
\begin{enumerate}	
[label={\normalfont(\arabic{enumi})}~~,ref={\normalfont(\arabic{enumi})},
leftmargin=*]
\item
\label{item:color-focusing-1}
For all $U\in\F$ and $\alpha_0\in\FS$ there exist $u_s\in U$, 
$\alpha_1,\ldots,\alpha_s\in\FS$ with
$\alpha_0<\alpha_1<\ldots<\alpha_s$ and $s$ distinct elements
$c_1, \ldots, c_s \in G$ such that
\begin{align}
\label{equation:counterweight}
&P(\alpha_j\cup\ldots\cup\alpha_s)u_s\in U,
~~~~~~~~~~~~\forall~P\in \A_0^-\cup\{1_{\G}\},
\\
\label{equation:weight}
&P(\alpha_j\cup\ldots\cup\alpha_s)u_s\in c_j^{-1}B,
~~~~~~\forall~P\in \A_0^+.
\end{align}
Moreover the system
$\A_s=\A_0^{u_s}\cup\Dshift{\alpha_s}\A_0^{u_s}\cup\ldots\cup 
\Dshift{\alpha_1\cup\ldots\cup\alpha_s}\A_0^{u_s}$
remains $\P$-minimal with $\P$-minimal element
$\Dshift{\alpha_1\cup\ldots\cup\alpha_s}P_{\min}^{u_s}$.
\item
\label{item:color-focusing-2}
For all $U\in\F$ and $\alpha_0\in\FS$ there exist $\alpha \in \FS$ with
$\alpha>\alpha_0$, $c\in G$ and $v\in cU$, such that 
\begin{align*}
P(\alpha)v\in&~cU,~~~~~~~~~~\forall~P\in \A^-,
\\
P(\alpha)v\in&~B,~~~~~~~~~~~~\forall~P\in \{1_{\G}\}\cup \A^+.
\end{align*}
\end{enumerate}
\end{Lemma}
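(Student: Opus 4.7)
The plan is to proceed by induction on $s\geq 1$, invoking at each step the weight-induction hypothesis that \cref{theorem:VdW-fsyndetic-2} is already known for all $\P$-minimal systems $\mathscr{B}=\mathscr{B}^-\cup\{1_\G\}\cup\mathscr{B}^+$ with $w(\mathscr{B}^+)<w(\A^+)$.

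\textbf{Base case} ($s=1$). Form the auxiliary system $\mathscr{B}:=\A_0 P_{\min}^{-1}$. Since $\A_0$ is $\P$-minimal with $\P$-minimal element $P_{\min}$, $\mathscr{B}$ is contained in $\P$ and contains $1_\G$. Split $\mathscr{B}=\mathscr{B}^-\cup\{1_\G\}\cup\mathscr{B}^+$ so that the identity is isolated, and (possibly after a preparatory conjugation via \cref{lemma:f-minimal-conjugation}) arrange that the element translated out has minimum weight in $\A_0^+$; properties \ref{weight7} and \ref{weight9} of \cref{proposition:weight} then yield $w(\mathscr{B}^+)<w(\A_0^+)=w(\A^+)$. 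Apply \cref{theorem:VdW-fsyndetic-2} to $\mathscr{B}$ with filter set $U$ and piecewise $\F$-syndetic set $G^{-1}B$ (which lies in $\F$ and is therefore piecewise $\F$-syndetic, since $\F$ is idempotent). This produces $\alpha_1>\alpha_0$ and $v\in U$ with $v\in G^{-1}B$, together with the corresponding $\mathscr{B}^{\pm}$-conditions. Setting $u_1:=P_{\min}(\alpha_1)^{-1}v$ and choosing $c_1\in G$ with $c_1 v\in B$ either recovers case \ref{item:color-focusing-1} for $s=1$, or — if no uniform $c_1$ serves all of $\A_0^+$ simultaneously — immediately produces case \ref{item:color-focusing-2} after unpacking the structure $\A_0=\bigcup_{c\in G}\A^c$.

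\textbf{Inductive step}. Suppose the lemma holds for $s-1$. If case \ref{item:color-focusing-2} already holds, we are done. Otherwise case \ref{item:color-focusing-1} for $s-1$ supplies $u_{s-1}$, $\alpha_1<\cdots<\alpha_{s-1}$, distinct $c_1,\ldots,c_{s-1}\in G$, and an extended $\P$-minimal system $\A_{s-1}$ with $\P$-minimal element $P_{\min}^{(s-1)}:=\Dshift{\alpha_1\cup\cdots\cup\alpha_{s-1}}P_{\min}^{u_{s-1}}$. I would then re-run the base-case argument with $\A_{s-1}$ in place of $\A_0$, shrinking $U$ so that the filter sets produced by \cref{lemma:f-minimal-conjugation} and \cref{lemma:f-minimal-derivative} accommodate all accumulated data and the further-enlarged $\A_s$ remains $\P$-minimal with the predicted $\P$-minimal element. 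We either land in case \ref{item:color-focusing-2}, or we obtain $u_s,\alpha_s$ and a color $c_s\in G$. If $c_s\notin\{c_1,\ldots,c_{s-1}\}$ the chain extends and case \ref{item:color-focusing-1} for $s$ is proved. Otherwise $c_s=c_j$ for some $j<s$, and the pigeonhole collapse yields case \ref{item:color-focusing-2}: the combined shift $\alpha:=\alpha_{j+1}\cup\cdots\cup\alpha_s$, together with conjugation by $c_j$, produces a point $v\in c_j U$ with $P(\alpha)v\in B$ for $P\in\{1_\G\}\cup\A^+$ and $P(\alpha)v\in c_j U$ for $P\in\A^-$.

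\textbf{Main obstacle}. The genuinely delicate part is not the weight-induction application itself but the careful propagation of $\P$-minimality and $\F$-measurability along the growing chain — at every extension one must shrink $U$ so that the multiple filter sets arising from \cref{lemma:f-minimal-conjugation} and \cref{lemma:f-minimal-derivative} intersect into a single usable set — and verifying that the pigeonhole collapse actually recovers case \ref{item:color-focusing-2} with polynomials ranging over $\A^-$ and $\A^+$, not merely over their conjugated enlargements $\A_0^-$ and $\A_0^+$. This final reduction at the moment of collapse is precisely where the structure $\A_0=\bigcup_{c\in G}\A^c$ must be unpacked, and matching the $c_j U$ side conditions to $v\in c_j U$ is what dictates the asymmetric form in which case \ref{item:color-focusing-2} is stated.
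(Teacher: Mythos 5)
Your skeleton (induction on $s$; translate the system so the positive part drops in weight and invoke the hypothesis that \cref{theorem:VdW-fsyndetic-2} holds below $w(\A^+)$; maintain $\P$-minimality via \cref{lemma:f-minimal-conjugation} and \cref{lemma:f-minimal-derivative}; pigeonhole on colors to fall into case \ref{item:color-focusing-2}) matches the paper's strategy in outline, but two steps are wrong as written and one essential mechanism is missing. In the base case you translate by $P_{\min}^{-1}$: the system $\A_0P_{\min}^{-1}$ does lie in $\P$, but nothing makes its positive part have weight smaller than $w(\A^+)$, since $P_{\min}$ need not belong to $\A_0^+$, let alone have minimal weight there; and a ``preparatory conjugation'' via \cref{lemma:f-minimal-conjugation} cannot repair this, because conjugation preserves weight (part \ref{weight9} of \cref{proposition:weight}) and has no bearing on which element gets translated out. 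The correct move is to translate by $P_{\low}^{-1}$ with $P_{\low}\in\A_0^+$ of minimal weight, so that part \ref{weight7} of \cref{proposition:weight} gives $w(\mathscr{B}_0^+)<w(\A_0^+)=w(\A^+)$, while $\P$-minimality survives automatically because right translation does not change the differences: the new $\P$-minimal element is $P_{\min}P_{\low}^{-1}$. Second, you apply \cref{theorem:VdW-fsyndetic-2} with target $G^{-1}B$ and try to extract a uniform color afterwards, with the fallback that failure ``immediately produces case \ref{item:color-focusing-2}''; that fallback is unjustified, since different members of $\A_0^+$ landing in different sets $c^{-1}B$ yields neither alternative. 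The color has to be fixed \emph{before} applying \cref{theorem:VdW-fsyndetic-2}: by \cref{rem:partition-regularity-pw-f-syndetic} some $c_1^{-1}B$ is piecewise $\F$-syndetic, and one applies the theorem with target $c_1^{-1}B\cap U_0$ (the base case then always produces case \ref{item:color-focusing-1} with $s=1$; case \ref{item:color-focusing-2} only ever arises at a color collision).

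The more serious gap is in the inductive step. You invoke case \ref{item:color-focusing-1} for $s-1$ with the original $U$ and $B$ and then ``re-run the base case'', shrinking $U$ in advance. But the inherited containments $P(\alpha_j\cup\ldots\cup\alpha_{s-1})u_{s-1}\in U$ and $P(\alpha_j\cup\ldots\cup\alpha_{s-1})u_{s-1}\in c_j^{-1}B$ concern a single point, while the next stage replaces $u_{s-1}$ by $u_{s-1}g$ for a group element $g$ found only later; no advance shrinking of $U$ (and certainly none of $B$, which is merely piecewise $\F$-syndetic and not in $\F$) can guarantee that these memberships survive right multiplication by $g$. This is exactly what the paper's ultrafilter manoeuvre provides: pick $q\in K(\FilterClosure{\F})$ with $B\in q$ and apply the induction hypothesis not to $(U,B)$ but to $B'=B/q$ and $U'=(U/\F)\cap\big((G^{-1}B)/\F\big)$ — legitimate because the statement quantifies over all such $U$ and all $B$ with $G^{-1}B\in\F$, and $B'$ is $\F$-syndetic with $G^{-1}B'\in\F$ by \cref{theorem:SZZ-2.2}. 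Membership in $U'$ and $c_j^{-1}B'$ then upgrades to product containments $P(\cdots)u_sU''\subset U$ and $P(\cdots)u_sB''\subset c_j^{-1}B$ with $U''\in\F$ and $B''\in q$, which are stable under the later choice of $u_{s+1}$; moreover $B''\cap U_s\in q$ is what makes the next target piecewise $\F$-syndetic, and $u_sB''\subset G^{-1}B$ is what produces the new color $c_{s+1}$. Without this device, or an equivalently strengthened invariant carried through the induction, your induction does not close. Two smaller points: at a collision with $c_j$ the combined shift should be $\alpha_j\cup\ldots\cup\alpha_s$ (starting at $\alpha_j$, not $\alpha_{j+1}$), and the passage from $\A_0$ back to $\A$ in case \ref{item:color-focusing-2}, which you rightly flag, is simply $P(\alpha)\,c\,u_sn=c\,P^c(\alpha)u_sn$ with $P^c\in\A_0$ and $n\in U''\cap N_s$.
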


\begin{Remark}
In many classical proofs of \ref{theorem:vdW} and the Polynomial van der Waerden theorem (cf. \cite{GRS90,Walters00}), monochromatic configurations of the form $\{P(\alpha)u:P\in \A\}$ are referred to as \emph{sets focused at $u$}. Moreover, a finite collection of sets focused at the same point $u$ with the property that no two sets have the same ``color'' (as it is the case in equation \eqref{equation:weight}) are referred to as \emph{a collection of color focused sets}. The inductive procedure of constructing larger and larger families of color focused sets is then called the \emph{color focusing argument}. \cref{lemma:color-focusing} can be thought of as a filter-sensitive generalization of the original color focusing argument.
\end{Remark}

\begin{proof}[Proof of \cref{lemma:color-focusing}]
In this proof it will be convenient to identify polynomial mappings
with their equivalence class of $\FF$-a.e.~equivalent
polynomial mappings. This is allowed since the statement of
\cref{lemma:color-focusing} as well as all proceeding arguments
in this proof are insensitive to replacing
polynomial mappings with elements in their $\FF$-a.e.~equivalence class.

We proceed by induction on $s$ and start with $s=1$.
We can write $\A_0$ as $\A_0^{-}\cup \{1_\G\}\cup \A_0^{+}$,
where $\A_0^{\pm}=\bigcup_{c\in G} c^{-1}\A^{\pm} c$.
We know that $w(\A^+)=w(\A_0^+)$, because of
\cref{proposition:weight}, part \ref{weight9}.
Since we only care about $\FF$-a.e.~equivalency classes,
we tacitly assume that in the decomposition
$\A_0=\A_0^{-}\cup \{1_\G\}\cup \A_0^{+}$ all polynomial mappings
in $\A_0$ that are $\FF$-a.e.~equal to $1_\G$ are grouped with
$\{1_\G\}$ and that $\A_0^{+}$ and $\A_0^{-}$ contain no
polynomial mapping that is $\FF$-a.e.~equal to $1_\G$.
Let us pick $P_{\low}\in \A_0^+$ such that
$w(\{P_{\low}\})\leq w(\{P\})$ for all $P\in\A_0^+$.
Define 
\[
\mathscr{B}_0^- =
\big\{PP_{\low}^{-1}: P\in \A_0^-\cup\{1_{\G}\}\big\}~~\text{and}~~
\mathscr{B}_0^+=\big\{PP_{\low}^{-1}:
P\in\A_0^+\backslash\{P_{\low}\}\big\}.
\]
It follows from
\cref{proposition:weight}, part \ref{weight7},
that $w(\mathscr{B}_0^+)<w(\A_0^+)$.
Also, after a moment's consideration, we see that 
$\mathscr{B}_0=\mathscr{B}_0^-\cup\{1_{\G}\}\cup \mathscr{B}_0^+$ is
an $\P$-minimal system with
$\P$-minimal element $Q_{\min}=P_{\min}^{~}P_{\low}^{-1}$.
Using \cref{lemma:f-minimal-derivative}
we can find an $U_0\in\F$ with $U_0\subset U$
and $\beta_0\in\FS$ with $\beta_0>\alpha_0$ such that
$\A_0^{u_1}\cup \Dshift{\alpha_1}\A^{u_1}$ is
$\P$-minimal with $\P$-minimal element
$\Dshift{\alpha_1}P_{\min}^{u_1}$ whenever $\alpha_{1}>\beta_0$
and $P(\alpha_1)u_1 \in U_0$ for all
$P\in\A_0$.

Since $G^{-1}B\in\F$, there exists some
$c_1\in G$ such that $c_1^{-1}B$ is piecewise $\F$-syndetic (cf.
\cref{rem:partition-regularity-pw-f-syndetic}).
Let us put $N_0:= c_1^{-1}B\cap U_0$ and let us note that
the set $N_0$ is piecewise $\F$-syndetic, as it is
an intersection of a piecewise $\F$-syndetic set and a set belonging to $\F$.  
Since the weight of the system $\mathscr{B}_0^+$ is strictly
smaller than the weight of $\A^+$, we can apply
\cref{theorem:VdW-fsyndetic-2} in order
to find $\alpha_1>\beta_0$ and $u_1'\in U_0$ such that
\begin{align*}
Q(\alpha_1)u_1'\in&~U_0,
~~~~~~~\forall~Q\in \mathscr{B}_0^-,
\\
Q(\alpha_1)u_1'\in&~N_0,
~~~~~\forall~Q\in\{1_\G\}\cup\mathscr{B}_0^+.
\end{align*}
If we put
$u_1= P_{\low}^{-1}(\alpha_1)u_1'$,
then a simple calculation shows that $Q(\alpha_1)u_1'=P(\alpha_1)u_1$.
With this choice of $u_1$, $\alpha_1$ and $c_1$
equations \eqref{equation:counterweight}
and \eqref{equation:weight} are satisfied and the system
$\A_0^{u_1}\cup \Dshift{\alpha_1}\A^{u_1}$ remains $\P$-minimal.
This concludes the case $s=1$.

Next, let us deal with the inductive step, $s\to s+1$. 
Take any ultrafilter $q\in K(\FilterClosure{\F})$ with $B\in q$
(cf. \cref{theorem:SZZ-2.3}).
Let $B':= B/q$ and let $U':=(U/\F)\cap((G^{-1}B)/\F)$.
Observe that $B'$ is $\F$-syndetic, by virtue of
\cref{theorem:SZZ-2.2}, and that $U'\in\F$. Also, since $G^{-1}B\in\F$, it follows that
$(G^{-1}B)/q \in \F$. A simple calculation then shows that
$G^{-1}(B/q)=(G^{-1}B)/q$, which tells us that $G^{-1}B'\in\F$.

This means we can apply the induction hypothesis to $U'$ and $B'$
in order to find
$u_s\in U'$, elements $\alpha_1,\ldots,\alpha_s\in \FS$ with
$\alpha_0<\alpha_1<\ldots<\alpha_s$, and
$s$ distinct `colors' $c_1,\ldots,c_s\in G$ such that
\begin{align*}
&P(\alpha_j\cup\ldots\cup\alpha_r)u_s\in U',~~~~~~~~~~~~~~
\forall~P\in \A_0^-\cup\{1_\G\},
\\
&P(\alpha_j\cup\ldots\cup\alpha_r)u_s\in c_j^{-1}B',~~~~~~~~
\forall~P\in \A_0^+.
\end{align*}
This implies that there exist sets $B''\in q$ and $U''\in \F$
such that
\begin{align}
\label{equation:counterweight-2}
&P(\alpha_j\cup\ldots\cup\alpha_s)u_sU''\subset U,
~~~~~~~~~~~~~~\forall~P\in \A_0^-\cup\{1_\G\},
\\
\label{equation:weight-2}
&P(\alpha_j\cup\ldots\cup\alpha_s)u_sB''\subset c_j^{-1}B,
~~~~~~~~\forall~P\in \A_0^+.
\end{align}
Since $u_s\in U/\F$ we may assume $u_sU''\subset U$, because otherwise
we can replace $U''$ with $U''\cap u_s^{-1}U$.
Analogously, since $u_s\in (G^{-1}B)/\F$ we may assume that
$u_sB''\subset G^{-1}B$ because otherwise we can replace $B''$
with $B''\cap u_s^{-1} G^{-1}B$.
Since $G^{-1}B$ covers $u_sB''$, there exists a piecewise
$\F$-syndetic subset
$N_s\subset B''$ and a `color' $c_{s+1} \in F$ such that
$u_sN_s\subset c_{s+1}^{-1}B$. 

Now we have to distinguish two cases. The first case is $c_{r+1}=c_j$ for some $j\in \{1,\ldots,r\}$. In this case one may
take any $n\in U''\cap N_s$ and put $c:=c_{r+1}$,
$v:=cu_sn$ and
$\alpha:=\alpha_j\cup\ldots\cup \alpha_r$.
With this choice of $\alpha\in\FS$, $c\in G$ and $v\in cU$ we are
in case \ref{item:color-focusing-2} of
\cref{lemma:color-focusing} and therefore we are done with the
current inductive step $s\to s+1$, as well as with all subsequent
inductive steps, and the inductive process terminates here.

The second case is when $c_{s+1}\neq c_j$ for all $j\in \{1,\ldots,s\}$.
If this is the case then we proceed as follows. Define 
$$
\A_s^{\pm}:=u_s^{-1}\big(\A_0^{\pm}\cup \Dshift{\alpha_s}\A_0^{\pm}\cup
\ldots\cup \Dshift{\alpha_1\cup\ldots\cup\alpha_s}\A_0^{\pm}\big)u_s.
$$
Under the assumptions
of the induction hypothesis, the system $\A_s$ is $\P$-minimal with
$\P$-minimal element $\Dshift{\alpha_1\cup\ldots\cup\alpha_s}P_{\min}^{u_s}$. 
Using \cref{lemma:f-minimal-derivative} we can find $U_s\in \F$ with $U_s\subset U''$
and $\beta_s\in\FS$ with $\beta_s>\alpha_s$ such that
$\A_s^{u}\cup \Dshift{\alpha_{s+1}}\A_s^u$ is
$\P$-minimal with $\P$-minimal element
$\Dshift{\alpha_1\cup\ldots\cup\alpha_{s+1}}P_{\min}^{u_su}$
whenever $\alpha_{s+1}>\beta_s$ and $P(\alpha_{s+1})u\in U_s$ for all
$P\in\A_s$. Let us put $N_s:= B''\cap U_s$. Since $B''$ is contained in
$q$ and $U_s$ is contained in $\F$, it follows that $N_s$
is contained in $q$. Hence $N_s$ is piecewise $\F$-syndetic.

Let $P_{\low}$ denote the element of
$\A_s^+$ of lowest weight, i.e. $w(\{P_{\low}\})\leq w(\{P\})$ for all
$P\in \A_s^+$.
Again, we assume without loss of generality that
$\A_s^+$ contains no polynomial mappings that are
$\FF$-a.e.~equivalent to $1_\G$. In particular,
$P_{\low}$ is not $\FF$-a.e.~equal to $1_\G$.
Define 
\[
\mathscr{B}_s^- =
\big\{PP_{\low}^{-1}: P\in \A_s^-\cup\{1_{\G}\}\big\}~~\text{and}~~
\mathscr{B}_s^+=\big\{PP_{\low}^{-1}:
P\in\A_s^+\backslash\{P_{\low}\}\big\}.
\]
We have $w(\mathscr{B}_s^+)<w(\A^+)$ by \cref{proposition:weight}, part \ref{weight7}.
Also, $\mathscr{B}_s=\mathscr{B}_s^-\cup\{1_{\G}\}\cup \mathscr{B}_s^+$ is
an $\P$-minimal system with
$\P$-minimal element
$Q_{\min}=\Dshift{\alpha_1\cup\ldots\cup\alpha_s}P_{\min}^u P_{\low}^{-1}$.
We can now apply \cref{theorem:VdW-fsyndetic-2} to find $\alpha_{s+1}>\beta_s$ and $u_{s+1}'\in U_s$ such that
\begin{align}
\label{equation:counterweight-3}
Q(\alpha_{s+1})u_{s+1}'\in&~U_s,
~~~~~~~\forall~Q\in \mathscr{B}_s^-,\\
\label{equation:weight-3}
Q(\alpha_{s+1})u_{s+1}'\in&~N_s,
~~~~~\forall~Q\in\{1_\G\}\cup\mathscr{B}_s^+.
\end{align} 
Finally, define $u_{s+1}:=u_sP_{\low}^{-1}(\alpha_{s+1})u_{s+1}'$.
For every $P\in\A_0$ and every $j\in[s]$ there exists $Q\in\B_s$
such that
\begin{eqnarray*}
Q(\alpha_{s+1})u_{s+1}'&=&
u_s^{-1}P^{-1}(\alpha_j\cup\ldots\cup\alpha_s)P(\alpha_j\cup\ldots\cup\alpha_{s+1})u_s
P_{\low}^{-1}(\alpha_{s+1})u_{s+1}'\\
&=&u_s^{-1}P^{-1}(\alpha_j\cup\ldots\cup\alpha_s)P(\alpha_j\cup\ldots\cup\alpha_{s+1})u_{s+1}.
\end{eqnarray*}
If we combine equations
\eqref{equation:counterweight-3} and \eqref{equation:weight-3} with
equations \eqref{equation:counterweight-2} and \eqref{equation:weight-2},
we obtain
\begin{align*}
&P(\alpha_j\cup\ldots\cup\alpha_{s+1})u_{s+1}\subset U,
~~~~~~~~~~~~~~\forall~P\in \A_0^-\cup\{1_\G\},
\\
&P(\alpha_j\cup\ldots\cup\alpha_{s+1})u_{s+1}\subset c_j^{-1}B,
~~~~~~~~\forall~P\in \A_0^+
\end{align*} 
for all $j\in\{1,\ldots,s\}$. For the case $j=s+1$ we simply note that
$u_sU_S\subset u_sU''\subset U$
and $u_sN_s\subset c_{s+1}^{-1}B$ and therefore it
follows from \eqref{equation:counterweight-3} and \eqref{equation:weight-3} and $\A_0^{u_s}\subset \A_s$ that
\begin{align*}
&P(\alpha_{s+1})u_{s+1}\subset U,
~~~~~~~~~~~~~~\forall~P\in \A_0^-\cup\{1_\G\},
\\
&P(\alpha_{s+1})u_{s+1}\subset c_{s+1}^{-1}B,
~~~~~~~~\forall~P\in \A_0^+.
\end{align*}
Also, the newly created system $\A_{s+1}$ is $\P$-minimal
because $\alpha_{s+1}>\beta_s$.
This completes the inductive step $s\to s+1$.
\end{proof}

\begin{proof}[Proof of \cref{theorem:VdW-fsyndetic-2}]
Since $A$ is piecewise $\F$-syndetic, there exists $p \in K(\FilterClosure{\F})$ such that $A \in p$.
Let $A'$ denote the $\F$-syndetic set $A/p$ (cf. \cref{theorem:SZZ-2.2}) and put $V':=V/p$.
Since $\F$ is idempotent we have $V' \in\F$ (since $V/\F \subset V/p$) and $V'/\F \in \F$.
Pick $C\in\F$ as guaranteed by \cref{lemma:f-minimal-conjugation}.
Since $A'$ is $\F$-syndetic, we can find a finite non-empty set $G\subset (V'/\F\cap C)$ such that $G^{-1}A' \in \F$.
Define $U:=\bigcap_{c\in G}c^{-1}V'$.
As an intersection of elements in $\F$, the set $U$ belongs to $\F$.

We now claim that for all $\beta\in\FS$ there exist $\alpha \in \FS$ with $\alpha>\beta$, $c\in G$ and $w\in \G$, such that 
\begin{align*}
P(\alpha)w\in&~cU,~~~~~~~~~~\forall~P\in \A^-,
\\
P(\alpha)w\in&~A',~~~~~~~~~~~~\forall~P\in \{1_{\G}\}\cup \A^+.
\end{align*}

Before we verify this claim, let us show how it can be used to finish the proof of \cref{theorem:VdW-fsyndetic-2}. Note that $cU\subset V'$ and recall that $V'=V/p$ and $A'=A/p$. Hence, there exists a set $N\in p$ such that
\begin{align*}
P(\alpha)w N\in&~ V,~~~~~~~~~~~~~\forall~P\in \A^-,
\\
P(\alpha)w N\in&~A,~~~~~~~~~~~\forall~P\in \{1_{\G}\}\cup \A^+.
\end{align*}
We can now choose $v$ to be any element in $wN$ and the proof is completed.

It remains to prove the above claim, which is done by induction on the weight of $\A^+$.
The beginning of the induction is given by the case $\A^+=\emptyset$.
Let $U':=U\cap G^{-1}A'$. Since $U'\in\F$ and $\F$ is idempotent, it follows that $U'/\F \in \F$.
Let $P_{\min}$ denote the
$\P$-minimal element of $\A=\A^-\cup\{1_\G\}$.
Note that $P^c P_{\min}^{-1}$ is $\F$-measurable for all $P\in \A^-\cup\{1_\G\}$ and for all $c \in G$.
Hence for all $\beta\in\FS$ there exist $\alpha \in \FS$ with $\alpha>\beta$ such that
$(P^cP_{\min}^{-1})(\alpha)\in U'/\F$ for all $P\in \A^-\cup\{1_\G\}$ and for all $c \in G$.

This implies that $P_{\min}(\alpha)G^{-1}A'\in \F$ as well as
$P_{\min}(\alpha)c^{-1}P^{-1}(\alpha)cU\in\F$ for all $P\in \A^-$ and for all $c \in G$.
In particular, the intersection of $P_{\min}(\alpha)G^{-1}A'$ with
$\bigcap_{c \in G}\bigcap_{P\in \A^-} P_{\min}(\alpha)c^{-1}P^{-1}(\alpha)cU$ is non-empty. Let $u$ be an arbitrary element in this intersection. Choose $c\in \G$ such that $u\in P_{\min}(\alpha)c^{-1}A'$ and set $w:=cP_{\min}^{-1}(\alpha)u$. Clearly, $w\in A'$ and $P(\alpha)w\in cU$ for all $P\in \A^-$. This completes the initial step of the induction.

For the proof of the inductive step assume that \cref{theorem:VdW-fsyndetic-2} has already been proven for all systems $\B^-\cup\{1_\G\}\cup \B^+$ with $w(\B^+)<w(\A^+)$.
We apply \cref{lemma:color-focusing} with $s=|G|+1$.
Since $s> |G|$ we cannot be in the case \ref{item:color-focusing-1} of \cref{lemma:color-focusing}; therefore we have to be in case \ref{item:color-focusing-2}.
This means we can find  $\alpha>\beta$, $c\in G$ and $w\in U$ such that 
\begin{align*}
P(\alpha)w\in&~cU,~~~~~~~~~~\forall~P\in \A^-,
\\
P(\alpha)w\in&~A',~~~~~~~~~\forall~P\in \{1_{\G}\}\cup \A^+,
\end{align*}
which completes the proof.
\end{proof}

\bibliographystyle{siam}
\providecommand{\noopsort}[1]{} 
\allowdisplaybreaks
\small
\bibliography{Bib.bib}

\bigskip
\footnotesize

\noindent
John H.~Johnson Jr.\\
\textsc{Department of Mathematics, Ohio State University, Columbus, OH 43210, USA}\par\nopagebreak
\noindent
\href{mailto:johnson.5316@osu.edu}
{\texttt{johnson.5316@osu.edu}}

\medskip

\noindent
Florian K.\ Richter\\
\textsc{Department of Mathematics, Ohio State University, Columbus, OH 43210, USA}\par\nopagebreak
\noindent
\href{mailto:richter.109@osu.edu}
{\texttt{richter.109@osu.edu}}
\end{document}